\documentclass[11pt]{amsart}

\usepackage{amsmath,amssymb,amsthm,mathtools}
\usepackage[margin=1in]{geometry}
\usepackage[colorlinks=true,citecolor=blue,linkcolor=blue,urlcolor=blue,
pdftitle={Graphical discreteness, Coxeter doublings and generalized polygons},
pdfauthor={Xing-Yu Hu}]{hyperref}
\newtheorem{theorem}{Theorem}[section]
\newtheorem{lemma}[theorem]{Lemma}
\newtheorem{proposition}[theorem]{Proposition}
\newtheorem{corollary}[theorem]{Corollary}
\newtheorem{question}[theorem]{Question}
\newtheorem*{theoremA}{Theorem A}
\newtheorem*{theoremB}{Theorem B}
\newtheorem*{corollaryC}{Corollary C}
\newtheorem*{theoremD}{Theorem D}
\newtheorem*{theoremE}{Theorem E}
\newtheorem*{corollaryF}{Corollary F}
\theoremstyle{definition}
\newtheorem{definition}[theorem]{Definition}
\theoremstyle{remark}
\newtheorem{remark}[theorem]{Remark}

\newcommand{\Aut}{\operatorname{Aut}}
\newcommand{\Ch}{\operatorname{Ch}}
\newcommand{\proj}{\operatorname{proj}}
\newcommand{\cC}{\mathcal C}
\newcommand{\KG}{\operatorname{KG}}
\newcommand{\Isom}{\operatorname{Isom}}
\newcommand{\QI}{\operatorname{QI}}
\newcommand{\Col}{\operatorname{Col}}

\title[Graphical discreteness and Coxeter doublings]
{Graphical discreteness, Coxeter doublings and generalized polygons}
\author{Xing-Yu Hu}
\address{School of Mathematics and Statistics, Hanjiang Normal University,
No.~18 Beijing South Road, Shiyan 442000, Hubei, China}
\email{huxingyu@hjnu.edu.cn}

\begin{document}

\begin{abstract}
Graphical discreteness is not a quasi-isometry invariant in general
\cite[Corollary~1.3]{MSSW}. For every
finite thick generalized $m$-gon $\Gamma$ with $m\geq3$, however, it is
constant on the class of finitely generated groups quasi-isometric to the
right-angled Coxeter group $W_\Gamma$. Such a group $\Lambda$ is graphically
discrete if and only if $\Gamma$ is nonflexible, where nonflexible means that
no nontrivial graph automorphism fixes a closed star pointwise
\cite[p.~1]{White}. The proof combines Fuchsian-building quasi-isometric rigidity with a
metric-strata argument that recovers the standard Coxeter Cayley graph from the
building metric. The standard Coxeter Cayley-graph criterion is also
strengthened from discreteness to compact-by-discreteness. Further results
derive obstructions from one-vertex doublings, classify compact-by-discreteness
for standard chamber graphs of graph products of finite groups, and give applications to
projective planes and to right-angled Coxeter groups with Menger curve
boundary. The arguments
do not determine whether any nonflexible finite thick generalized polygon
exists.
\end{abstract}

\subjclass[2020]{Primary 20F65; Secondary 20F55, 51E12, 51E15, 51E24, 22D05, 05C25}
\keywords{graphical discreteness, Coxeter group, Coxeter doubling,
generalized polygon, graph product, Fuchsian building, right-angled building}

\maketitle

\section{Introduction}

Graphical discreteness is not preserved by quasi-isometry in general
\cite[Corollary~1.3]{MSSW}. The main result concerns a natural family of
quasi-isometry classes on each of which it is nevertheless constant. If
$\Gamma$ is a finite thick generalized $m$-gon with $m\geq3$ and $\Lambda$ is
any finitely generated group quasi-isometric to the right-angled Coxeter group
$W_\Gamma$, then
\[
 \Lambda\text{ is graphically discrete}
 \quad\Longleftrightarrow\quad
 \Gamma\text{ is nonflexible}.
\]
Thus graphical discreteness is constant among finitely generated groups
quasi-isometric to $W_\Gamma$, even though it is not a quasi-isometry invariant
in general. This is Theorem~D below.

Graphical discreteness was introduced by Margolis, Shepherd, Stark and
Woodhouse as a rigidity condition governing all geometric graph actions of a
finitely generated group \cite[\S3.1]{MSSW}. A finitely
generated group $\Lambda$ is graphically discrete if $\Aut(X)$ is
compact-by-discrete for every connected locally finite graph $X$ on which
$\Lambda$ acts properly and cocompactly \cite[Proposition~3.6(3)]{MSSW}.
The word ``compact'' is essential. A
nondiscrete automorphism group can still be compact-by-discrete, so
nondiscreteness of $\Aut(X)$ for a single geometric graph action is not by
itself an obstruction.

For the special case $\Lambda=W_\Gamma$, the nonflexible implication can
already be recovered from the Davis-complex rigidity results of Haglund--Paulin
\cite[Th\'eor\`eme~5.12]{HaglundPaulin}, the right-angled finite-index
formulation in \cite[Remark~4.12]{Shepherd}, Xie's Fuchsian-building rigidity
theorem \cite{Xie}, recalled in \cite[Theorem~2.4]{BoundsXie}, the Bounds--Xie
realization \cite[Theorem~4.5]{BoundsXie}, and the finite-index quasi-isometry
criterion \cite[Theorem~3.27]{MSSW}. Theorem~1.11(ii) of
\cite{Shepherd} also gives a short route to the nonflexible implication for
arbitrary $\Lambda$. Every such $\Lambda$ is abstractly commensurable with
$W_\Gamma$. If $H\leq W_\Gamma$ has finite index, inclusion is a quasi-isometry
and induces $\QI(H)\cong\QI(W_\Gamma)$. Under this isomorphism, the
left-multiplication image of $H$ has finite index in the left-multiplication
image of $W_\Gamma$. The left-multiplication image of $W_\Gamma$ has finite
index in $\QI(W_\Gamma)$, as verified explicitly in the proof of
Theorem~\ref{thm:polygon-classification}. Moreover, $H$ is non-elementary hyperbolic
and hence tame, so \cite[Theorem~3.27]{MSSW} implies that $H$ is graphically discrete.
Let $\Lambda_0\leq\Lambda$ and $H\leq W_\Gamma$ be isomorphic finite-index
subgroups supplied by commensurability. Then $\Lambda_0$ is graphically
discrete, and so is $\Lambda$, since graphical discreteness passes from a
finite-index subgroup to the ambient group directly from the definition.
This route cannot recover the flexible implication, since graphical
discreteness need not pass to finite-index subgroups
\cite[Corollary~1.3]{MSSW}. For that direction,
\cite[Corollary~1.2]{BoundsXie} and the metric-strata lemma supply a common
Coxeter Cayley-graph witness throughout the quasi-isometry class.

The contributions beyond these predecessor results are concentrated in the
interfaces where graphical discreteness is stronger than ordinary rigidity.
The flexible implication in Theorem~D is proved uniformly for every finitely
generated group in the quasi-isometry class. Theorem~A upgrades the standard
Coxeter Cayley-graph criterion from discreteness to compact-by-discreteness.
Theorem~B turns one-vertex doublings into additional finite-index witnesses.
The later generalized-polygon and graph-product results make these criteria
explicit in finite-geometric and building-theoretic families.

The remaining results give obstructions and structural criteria for Coxeter groups
and graph products of finite groups, with applications to finite geometry. To state
Theorem~A, recall that the defining diagram of a Coxeter system $(W,S)$ has
vertex set $S$, where distinct $s,t\in S$ are joined by a
labelled edge exactly when $m_{st}<\infty$. It is \emph{flexible} if there are
$s\in S$ and a nontrivial label-preserving diagram automorphism $\phi$ that
fixes $s$ and every vertex joined to $s$ \cite[p.~1]{White}.

\begin{theoremA}
Let $(W,S)$ be a Coxeter system with $S$ finite. For the standard undirected
Cayley graph $\cC(W,S)$, the following conditions are equivalent.
\begin{enumerate}
\item $\Aut(\cC(W,S))$ is compact-by-discrete.
\item $\Aut(\cC(W,S))$ is discrete.
\item the defining diagram of $(W,S)$ is nonflexible.
\end{enumerate}
Consequently, a Coxeter group admitting a flexible Coxeter system is not
graphically discrete.
\end{theoremA}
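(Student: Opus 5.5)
The plan is to prove (2)$\Rightarrow$(1) trivially, (3)$\Rightarrow$(2) by a local-to-global rigidity argument, and (1)$\Rightarrow$(3) by constructing an unbounded family of automorphisms fixing a vertex, then using it to contradict compact-by-discreteness. The "consequently" clause follows from (1)$\Leftrightarrow$(3): $W$ acts freely and cocompactly on the connected locally finite graph $\cC(W,S)$. If the diagram is flexible, $\Aut(\cC(W,S))$ is not compact-by-discrete, so $W$ is not graphically discrete in the sense of \cite[Proposition~3.6(3)]{MSSW}.

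For (3)$\Rightarrow$(2), I would first show that every automorphism $\alpha$ of $\cC(W,S)$ preserves the edge colouring by $S$ up to a permutation that is locally constant. Edges at a vertex $w$ labelled $s,t$ lie in a common $2m_{st}$-cycle through the coset $w\langle s,t\rangle$ exactly when $m_{st}<\infty$. Shortest-cycle and "square/polygon" data at each vertex therefore recover the labelled diagram; this is the standard Haglund--Paulin style observation \cite[Th\'eor\`eme~5.12]{HaglundPaulin}. Thus $\alpha$ induces at each vertex a label-preserving diagram automorphism $\phi_w$. Moreover, $\phi_w$ and $\phi_{ws}$ agree on $s$ and on every $t$ adjacent to $s$, because the $\{s,t\}$-polygon through the edge $(w,ws)$ is carried by $\alpha$ to a polygon with corresponding labels. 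Nonflexibility then forces $\phi_w=\phi_{ws}$. By connectedness, $\phi_w$ is a single global diagram automorphism $\phi$. Hence $\alpha$ is left multiplication composed with the automorphism of $W$ induced by $\phi$, and it is determined by its value at $1$. The stabiliser of the vertex $1$ is therefore finite (it embeds in $\Aut$ of the labelled diagram), and in fact trivial on a finite neighbourhood, so $\Aut(\cC(W,S))$ is discrete.

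For (1)$\Rightarrow$(3), suppose $s$ and a nontrivial $\phi$ witness flexibility. Let $H$ be a half-space (the component of $\cC(W,S)$ minus the edges crossing the wall of the reflection $s$) that does not contain $1$, and define $\alpha$ to be the identity off $H$ and $w\mapsto s\,\phi(s w)$ on $H$. The fact that $\phi$ fixes $\operatorname{st}(s)$ is exactly what makes the edges along the wall, which carry labels in $\operatorname{st}(s)$, glue consistently. Conjugating by elements of $W$ gives such "partial flips" along infinitely many walls, arbitrarily far from $1$. All of them fix $1$, and they generate an infinite subgroup of the vertex stabiliser. This shows nondiscreteness, i.e.\ $\neg(2)$. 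The main obstacle is upgrading this to $\neg(1)$: a compact-by-discrete group has a compact open normal subgroup $K$, and I must show no such $K$ exists. My plan here is to take any compact open $K$; it contains the pointwise stabiliser of some ball $B(1,R)$. I would then show that its normal closure, in particular its conjugates under the cocompact $W$-action, moves vertices at unbounded distance inside a fixed stabiliser, contradicting compactness. Concretely, a flip along a wall disjoint from $B(g,R)$ lies in $gKg^{-1}$ and hence in $K$ by normality. Choosing walls that separate $1$ from $g$, with $g$ far away, produces elements of $K$ that do not fix $1$, indeed moving $1$ a distance roughly twice its distance to the wall. Since $K$ is compact, its orbit of $1$ is finite. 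Letting the wall distance grow gives unbounded displacement of $1$ under $K$, which is a contradiction. This verification needs care with the choice of walls, using the relation $gKg^{-1}=K$.
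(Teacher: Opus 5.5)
\textbf{The flip in the flexible direction is not an automorphism in general.} Your proofs of (2)$\Rightarrow$(1), of (3)$\Rightarrow$(2) modulo the Haglund--Paulin input, and of the final clause from $\neg(1)$ are fine. Your compact open normal subgroup argument is also a legitimate alternative to the orbit criterion of Theorem~\ref{thm:MSSW-orbits}: $K$ contains the pointwise fixer of $B(g,R)$ for every $g\in W$, and $K\cdot 1$ is finite. The problem is the automorphism you feed into it.

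Since $\phi(s)=s$, your formula gives $s\phi(sw)=\phi(w)$. So $\alpha$ is the identity on the $1$-side of the wall of $s$ and equals $\phi$ on the other side. A wall-crossing edge is $\{w,sw\}$ with $w$ on the $1$-side and label $w^{-1}sw\in S$. It is sent to $\{w,s\phi(w)\}$, which is an edge only if $w^{-1}s\phi(w)\in S$. The labels of these edges are the generators conjugate to $s$, not the elements of $\operatorname{st}(s)$. Once some $m_{st}$ is odd, the wall of $s$ crosses the $t$-edge $\{ts,tst\}$ and continues through $\{t,x\}$-polygons that $\phi$ need not fix.

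Here is a concrete failure. Let $S=\{s,t,u,u'\}$ with $m_{st}=m_{tu}=m_{tu'}=3$ and all other $m_{xy}=\infty$, and let $\phi$ swap $u$ and $u'$. This diagram is flexible at $s$, since $t$ is the only vertex joined to $s$. For $w=tsut$ one has $w^{-1}sw=u$ and $\ell(w)=4<5=\ell(sw)$. But $w^{-1}s\phi(w)=tutu't$ is reduced of length $5$, so $\alpha$ destroys the edge $\{w,wu\}$.

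Your flip does work when every finite $m_{sr}$ is even. For example, in the right-angled case the $1$-side vertices of the wall are exactly $W_{\operatorname{lk}(s)}$, which $\phi$ fixes. For general finite-rank $(W,S)$, however, neither $\neg(2)$ nor $\neg(1)$ is established.

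The missing idea is White's first-$s$ map $\Psi(w_1sw_2)=\phi(w_1)sw_2$, where $w_1$ contains no $s$. It applies $\phi$ only before the first $s$-crossing, not to a whole half-space. Its well-definedness is checked on braid moves straddling the first $s$: there the other letter $r$ has $m_{sr}<\infty$, hence $\phi(r)=r$. Note that $\Psi$ is the identity on the half-space $\{w:\ell(sw)<\ell(w)\}$. Its $W$-conjugates are therefore exactly the ``identity on a half-space'' automorphisms your normality argument needs.

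\textbf{Two smaller points.} First, even when flips exist, the displacement of $1$ is not governed by its distance to the wall. The conjugate $L_h\alpha L_h^{-1}$ with $h^{-1}$ on the $s$-side sends $1$ to $h\phi(h)^{-1}$, which equals $1$ whenever $\phi(h)=h$. You need explicit elements built from $s$ and a generator $t$ moved by $\phi$, which forces $m_{st}=\infty$. The paper does this by conjugating $\Psi$ by $w_n=(st)^ns$ and showing $|\Aut(\cC(W,S))_1\cdot(st)^k|\geq k+1$.

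Second, in (3)$\Rightarrow$(2), cycle lengths at a vertex do not recover the labels. In $\langle s,t\rangle\times\langle u\rangle$ with $m_{st}=\infty$ and $m_{su}=m_{tu}=2$, the $s$- and $t$-edges at $1$ lie on the $6$-cycle with labels $s,u,s,t,u,t$, just as they would if $m_{st}=3$. That graph automorphisms respect the labelling up to local diagram automorphisms is the content of the Haglund--Paulin restriction isomorphism the paper cites. Granting that, your propagation via nonflexibility is correct.
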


Haglund--Paulin's restriction isomorphism and rigid-case vertex-stabilizer
calculation give the nonflexible direction for finite-rank Coxeter systems
\cite[Corollaire~5.8 and Th\'eor\`eme~5.12]{HaglundPaulin}. Their flexible-side
construction assumed word hyperbolicity. White later introduced a first-$s$
Cayley-graph automorphism for flexible diagrams
\cite[Definition~21 and Proposition~26]{White}. Berlai and Ferov independently
established the finite-rank nondiscreteness criterion and, more generally,
characterized uncountable vertex stabilizers in the standard Coxeter Cayley
graph \cite[Corollary~B and Theorem~A]{BerlaiFerov}.
To rule out compact-by-discreteness, the first-$s$ construction is verified
directly, and conjugates of the resulting automorphism produce arbitrarily
large finite vertex-stabilizer orbits. The stabilizer-orbit criterion of
Margolis--Shepherd--Stark--Woodhouse
\cite[Theorem~3.10(1),(2)]{MSSW} then rules out compact-by-discreteness, not
merely discreteness.

Theorem~A uses the standard Cayley graph of the Coxeter system. A different
geometric witness arises from an index-two Coxeter subgroup. For a finite simplicial graph $\Gamma$, write $V\Gamma$ and $E\Gamma$ for its vertex and edge sets. For $v\in V\Gamma$, the Doubling Lemma
\cite[Lemma~2.7]{DHW} uses the graph $D_v\Gamma$ formed by gluing two copies
of $\Gamma\setminus\{v\}$ along their copies of
$\operatorname{lk}_\Gamma(v)$, and realizes $W_{D_v\Gamma}$ as an index-two
subgroup of $W_\Gamma$.

\begin{theoremB}
If $D_v\Gamma$ is flexible for some $v\in V\Gamma$, then the
right-angled Coxeter group $W_\Gamma$ is not graphically discrete.
\end{theoremB}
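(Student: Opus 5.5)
The plan is to build a geometric action of $W_\Gamma$ on the Cayley graph of the index-two subgroup, and then let Theorem~A do the work. Write $W' = W_{D_v\Gamma}$. By the Doubling Lemma \cite[Lemma~2.7]{DHW}, $W'$ is realized as an index-two subgroup of $W_\Gamma$: it is the kernel of the homomorphism $W_\Gamma\to\mathbb Z/2$ sending $v\mapsto 1$ and every other generator to $0$. Its standard generating set is $S'=\{u, vuv : u\in V\Gamma\setminus\{v\}\}$. Here $vuv=u$ exactly when $u\in\operatorname{lk}_\Gamma(v)$, which is why the two copies of $\Gamma\setminus\{v\}$ are glued along the link.

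Since $D_v\Gamma$ is flexible, Theorem~A (direction (1)$\Rightarrow$(3), in contrapositive form) shows that $\Aut(\cC(W',S'))$ is not compact-by-discrete. Write $X=\cC(W',S')$.

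The key step is to show that $W_\Gamma$ itself acts properly and cocompactly on $X$ by graph automorphisms.

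1. Left multiplication gives $W'$ acting on $X$.
2. Conjugation by $v$ preserves $W'$ and permutes $S'$: it swaps $u\leftrightarrow vuv$ and fixes the link vertices. So the map $\sigma\colon g\mapsto vgv$ is a graph automorphism of $X$. It sends the edge $\{g,gs\}$ to $\{vgv, vgv\cdot vsv\}$, and $vsv\in S'$.
3. The subgroup of $\Aut(X)$ generated by $W'$ and $\sigma$ is a quotient of $W'\rtimes\langle v\rangle\cong W_\Gamma$, and the map $W_\Gamma\to\Aut(X)$ is an honest action. Concretely, $g\in W'$ acts by $x\mapsto gx$, and $v$ acts by $x\mapsto vxv$. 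The relations $v^2=1$ and $v g v^{-1}=\sigma g\sigma^{-1}$ hold.
4. The action is cocompact, since $W'$ is already vertex-transitive. It is proper, since the stabilizer of a vertex contains the finite-index subgroup stabilizer in $W'$, which is trivial, so all vertex stabilizers have order at most $2$.
5. $X$ is connected and locally finite.

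Hence $W_\Gamma$ acts geometrically on a connected locally finite graph $X$ with $\Aut(X)$ not compact-by-discrete. By the definition recalled from \cite[Proposition~3.6(3)]{MSSW}, $W_\Gamma$ is not graphically discrete.

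The main obstacle is step~3: checking that $\sigma$ really is a graph automorphism, and that the combined map is a well-defined homomorphism from $W_\Gamma$ rather than only from some extension. The check is that in $W_\Gamma=W'\rtimes\langle v\rangle$ every element is uniquely $g$ or $gv$ with $g\in W'$. Define the action of $gv$ as $x\mapsto g\,vxv$, and verify multiplicativity directly using $vW'v=W'$.

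Alternatively, if the definition in \cite{MSSW} permits a noninjective or non-free geometric action, one can cite the fact that graphical discreteness passes to finite-index overgroups only in one direction, which is exactly why this construction is needed.
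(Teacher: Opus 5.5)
Your proposal is correct and follows the paper's proof essentially verbatim. The paper likewise lets $W_\Gamma=H_v\rtimes\langle v\rangle$ act on the Cayley graph of $H_v\cong W_{D_v\Gamma}$ by $(hv^\epsilon)\cdot x=h\alpha^\epsilon(x)$, where $\alpha$ is induced by conjugation by $v$; it observes that the stabilizer of $1$ is $\langle v\rangle$ and then applies Theorem~A. Your closing ``alternatively'' paragraph is not a second argument and can be dropped: proper actions with finite, possibly nontrivial or non-faithful, stabilizers are already allowed by the definition, and finite-index permanence runs the wrong way to give a shortcut.
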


Conjugation by $v$ acts on the standard generating set of
$W_{D_v\Gamma}$ by the deck involution of $D_v\Gamma$. Hence $W_\Gamma$ acts
geometrically on the
standard Cayley graph of the doubled subgroup. If the doubled diagram is
flexible, Theorem~A shows that the full automorphism group of this graph is
not compact-by-discrete. This argument also recovers the obstruction from
proper partial conjugations.

\begin{corollaryC}
If $\Gamma\setminus\operatorname{st}_\Gamma(v)$ has at least two connected
components for some $v\in V\Gamma$, then $W_\Gamma$ is not graphically discrete. In
particular, the existence of a proper partial conjugation obstructs graphical
discreteness.
\end{corollaryC}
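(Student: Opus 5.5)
We reduce Corollary~C to Theorem~B. It suffices to show that if $\Gamma\setminus\operatorname{st}_\Gamma(v)$ has at least two components, then $D_v\Gamma$ is flexible, i.e.\ some vertex $s$ of $D_v\Gamma$ and some nontrivial graph automorphism of $D_v\Gamma$ fix $s$ and every neighbour of $s$. Recall that $D_v\Gamma$ consists of two copies $\Gamma'_1,\Gamma'_2$ of $\Gamma':=\Gamma\setminus\{v\}$ glued along $L:=\operatorname{lk}_\Gamma(v)$. Let $C_1,\dots,C_k$ ($k\ge2$) be the components of $\Gamma\setminus\operatorname{st}_\Gamma(v)$, and write $C_i^{(1)},C_i^{(2)}$ for their copies in $\Gamma'_1,\Gamma'_2$. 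Each $C_i^{(j)}$ is attached to the rest of $D_v\Gamma$ only through vertices of $L$, since $C_i$ meets no other component and $v$ has been removed.

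Define $\phi$ on $D_v\Gamma$ by exchanging $C_1^{(1)}$ and $C_1^{(2)}$ via the identification of the two copies, and fixing every other vertex: all of $L$, and all $C_i^{(j)}$ with $i\ge2$. This is a graph automorphism. Edges inside $C_1^{(j)}$ go to the corresponding edges of the other copy. An edge between $C_1^{(1)}$ and $x\in L$ goes to the edge between $C_1^{(2)}$ and $x$, which exists because both copies have the same adjacency to $L$. No other edges involve $C_1$. Since $C_1$ is nonempty, $\phi$ is nontrivial.

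For the fixed star we need a vertex $s$ such that $\phi$ fixes $s$ and all its neighbours. Take $s$ to be any vertex of $C_2^{(1)}$, which is nonempty because $k\ge2$. Its neighbours in $D_v\Gamma$ lie in $C_2^{(1)}\cup L$, and $\phi$ fixes all of these pointwise. Hence $\phi$ fixes $\operatorname{st}(s)$ pointwise, so $D_v\Gamma$ is flexible, and Theorem~B gives that $W_\Gamma$ is not graphically discrete.

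Main obstacle: the bookkeeping showing that no vertex of $C_1^{(j)}$ is adjacent to anything outside $C_1^{(j)}\cup L$. This holds because the vertices of $\Gamma'$ are exactly $L$ together with the components $C_i$, and distinct components are non-adjacent.

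For the ``in particular'': a partial conjugation by $v$ on a union of components of $\Gamma\setminus\operatorname{st}(v)$ is proper (not an inner automorphism) exactly when it uses some but not all components. This requires at least two components, so the hypothesis above holds.
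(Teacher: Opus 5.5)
Your proposal is correct and matches the paper's proof. The paper also uses the component swap $\tau_C$ on $D_v\Gamma$, notes that it fixes pointwise the closed star of any vertex in one copy of a second component (that star lies in the copy together with the common link), and then applies Theorem~B.

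For the ``in particular'' clause, you read ``proper'' as ``non-inner'' for a partial conjugation on a union of components. The paper instead means a single-component partial conjugation $\chi_{v,C}$ with $C\neq\Gamma\setminus\operatorname{st}_\Gamma(v)$. Either reading forces at least two components, and that easy direction is all that is needed.
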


The first assertion of Corollary~C is proved below as
Corollary~\ref{cor:separating-star}. The partial-conjugation formulation follows
immediately afterward. Corollary~\ref{cor:partial-double}, which applies when a
link vertex misses a component of the complement of the relevant closed star,
is called the \emph{link-gap criterion}. Corollary~\ref{cor:dominated-star},
which applies when distinct vertices $s,v$ satisfy
$\operatorname{st}(s)\subseteq\operatorname{st}(v)$ and
$\operatorname{st}(v)\neq V\Gamma$, is called the \emph{dominated-star
criterion}. The doubling obstruction is strictly stronger than these two
criteria. Corollary~\ref{cor:asymmetric-family}
constructs an infinite family $\Theta_m$, $m\geq2$, of pairwise
nonisomorphic asymmetric graphs without dominated vertices for which the
corresponding groups are one-ended, hyperbolic and not graphically discrete,
although the link-gap criterion fails at every vertex.

The generalized-polygon theorem announced at the outset uses the
Fuchsian-building metric of Bounds and Xie. They equip the Davis complex of
$W_\Gamma$ with a regular piecewise hyperbolic metric that makes it a Fuchsian
building \cite[Theorem~4.5]{BoundsXie}. The classification obtained here is
as follows.

\begin{theoremD}
Let $\Gamma$ be a finite thick generalized $m$-gon, where $m\geq3$, and let
$\Lambda$ be any finitely generated group quasi-isometric to $W_\Gamma$. Then
\[
 \Lambda\text{ is graphically discrete}
 \quad\Longleftrightarrow\quad
 \Gamma\text{ is nonflexible}.
\]
\end{theoremD}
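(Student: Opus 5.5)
The argument splits along the two implications, and both run through one structural input: Xie's Fuchsian-building quasi-isometric rigidity \cite{Xie}, \cite[Theorem~2.4]{BoundsXie}, applied to the Bounds--Xie realization \cite[Theorem~4.5]{BoundsXie}. That realization gives the Davis complex $\Sigma$ of $W_\Gamma$ a regular piecewise hyperbolic metric making it a Fuchsian building. Consequently every quasi-isometry of $W_\Gamma$ lies at bounded distance from an isometry of $\Sigma$, and $\QI(W_\Gamma)\cong\Isom(\Sigma)$. Any finitely generated $\Lambda$ quasi-isometric to $W_\Gamma$ therefore acts, via a quasi-action, by isometries on $\Sigma$. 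That action is proper and cocompact, with finite kernel; this is the content of \cite[Corollary~1.2]{BoundsXie}.

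\emph{Nonflexible implies graphically discrete.} First I will verify that the image of $W_\Gamma$ has finite index in $\Isom(\Sigma)$. The isometry group preserves the chamber structure, so it acts on the chamber graph, which is the Cayley graph $\cC(W_\Gamma,V\Gamma)$. If $\Gamma$ is nonflexible, Theorem~A makes $\Aut(\cC(W_\Gamma,V\Gamma))$ discrete. Since it also acts cocompactly and contains $W_\Gamma$ acting freely and transitively on vertices, $W_\Gamma$ has finite index in it. Next I will run the commensurability route from the introduction. $\Lambda$ acts geometrically on $\Sigma$ with a discrete isometry group containing $W_\Gamma$ with finite index, so $\Lambda$ and $W_\Gamma$ are commensurable. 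A finite-index $H\le W_\Gamma$ is non-elementary hyperbolic, hence tame, and its image has finite index in $\QI(H)$. By \cite[Theorem~3.27]{MSSW}, $H$ is graphically discrete. Finally, graphical discreteness passes from the isomorphic finite-index subgroup $\Lambda_0\le\Lambda$ up to $\Lambda$.

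\emph{Flexible implies not graphically discrete.} This direction does not pass to finite-index subgroups, so I will instead build one common witness graph for every $\Lambda$. Let $\Lambda$ act geometrically on $\Sigma$ by isometries. The key step, and the main obstacle, is the metric-strata lemma: the combinatorial chamber graph, which is $\cC(W_\Gamma,V\Gamma)$, must be recoverable from the piecewise hyperbolic metric alone, so that $\Isom(\Sigma)$ acts on it by graph automorphisms. My first attempt will be to stratify $\Sigma$ by local geometry. Vertex links are distinguished metrically: the rank-two (polygon) vertices carry the $m$-gon angle data, and the edge points carry branching of thickness $q+1$. So isometries preserve the strata, the vertices of type $\emptyset$ (chamber centres) and their adjacency through codimension-one panels. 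The delicate point is that Bounds--Xie regularity makes all chambers isometric right-angled hyperbolic polygons. I will therefore need to check that the barycentric strata of the Davis chamber, namely the points fixed by a finite special subgroup, are metrically characterized, for example by the cone angle or by the number of branches of the panel link.

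Once this is established, $\Lambda$ acts on $X=\cC(W_\Gamma,V\Gamma)$ properly and cocompactly; the action is proper because it is proper on $\Sigma$ and the chamber-centre map is equivariant and coarsely surjective. Since $\Gamma$ is flexible, Theorem~A shows that $\Aut(X)$ is not compact-by-discrete. Hence $\Lambda$ is not graphically discrete by the definition \cite[Proposition~3.6(3)]{MSSW}. The only remaining routine check is local finiteness of $X$, which holds because $\Gamma$ is finite.
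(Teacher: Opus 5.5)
Your architecture matches the paper's: a geometric action of $\Lambda$ on the Bounds--Xie building $\Sigma$, a metric-strata lemma that lands this action on $\cC(W_\Gamma,V\Gamma)$, Theorem~A in the flexible case, and discreteness of $\Isom(\Sigma)$ together with quasi-isometric rigidity in the nonflexible case.

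\textbf{The metric-strata step.} The step you call the key one is not carried out, and your sketch of it rests on an incorrect picture of the metric. In the Bounds--Xie structure the $2$-cells are the cubical squares. Each is a regular hyperbolic quadrilateral with angles $\pi/m$, not a right-angled polygon. The points whose space of directions is a metric generalized $m$-gon (the graph $\Gamma$ with edges of length $\pi/m$) are exactly the cubical vertices, i.e.\ the type-$\emptyset$ points $w\in W_\Gamma$ themselves. So your ``rank-two'' vertices and your chamber centres are the same stratum, and nothing further needs to be distinguished.

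Your proposed fix, characterizing the points fixed by finite special subgroups through local invariants such as cone angle, cannot work in any case. A square centre has the same space of directions, a circle of length $2\pi$, as every other interior point of its square. Likewise an edge midpoint has the same space of directions as every other interior point of its edge.

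What is needed, and suffices, is a three-stratum argument:
\begin{itemize}
\item an interior point of a square has a circle as its space of directions;
\item an interior point of an $s$-edge has the suspension of a $\deg_\Gamma(s)$-point set, which has exactly two branch points because thickness gives $\deg_\Gamma(s)\geq3$;
\item a cubical vertex has the metric graph $\Gamma$, which has more than two branch points.
\end{itemize}
Hence isometries preserve the cubical vertices and the open edges, and so act on the $1$-skeleton $\cC(W_\Gamma,V\Gamma)$.

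For the nonflexible direction you also need the restriction $\Isom(\Sigma)\to\Aut(\cC(W_\Gamma,V\Gamma))$ to be injective. This holds because an isometry fixing every cubical vertex preserves each square and fixes it pointwise. Without injectivity, finite index of $W_\Gamma$ in $\Aut(\cC(W_\Gamma,V\Gamma))$ tells you nothing about $\Isom(\Sigma)$.

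\textbf{The commensurability detour.} In the nonflexible direction, your route through commensurability leaves a step unjustified. The action of $\Lambda$ on $\Sigma$ has a finite kernel $K$. What you actually obtain is that the image $J\cong\Lambda/K$ is commensurable with $W_\Gamma$ inside the discrete group $\Isom(\Sigma)$. To get a finite-index $\Lambda_0\leq\Lambda$ isomorphic to a finite-index subgroup of $W_\Gamma$, you need a finite-index subgroup of $\Lambda$ meeting $K$ trivially. That requires something like residual finiteness of $\Lambda$, which you do not supply, and finite extensions of residually finite groups need not be residually finite.

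The detour is also unnecessary. $\Lambda$ is itself non-elementary hyperbolic, hence tame. Under the isomorphism $\QI(\Lambda)\cong\QI(\Sigma)$ induced by an orbit map, the left-multiplication image of $\Lambda$ is the image of $J$. By properness and cocompactness, $J$ has finite index in the discrete group $\Isom(\Sigma)$, and $\Isom(\Sigma)$ surjects onto $\QI(\Sigma)$. Therefore \cite[Theorem~3.27]{MSSW} applies to $\Lambda$ directly, which is how the paper argues.
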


Thus graphical discreteness, although not a quasi-isometry invariant in general
\cite[Corollary~1.3]{MSSW}, is constant on each of these quasi-isometry classes.

By the Feit--Higman theorem, the finite thick case can occur only for
$m\in\{3,4,6,8\}$ \cite{FeitHigman}. The uniform formulation $m\geq3$ is
retained in Theorem~D.

Theorem~D does not assert the existence of a nonflexible finite thick
generalized $m$-gon with $m\geq3$. The existence question is posed in
Section~\ref{sec:questions}.
Proposition~\ref{prop:structural-flexibility} uses standard root-group and
elation actions to exclude two broad families. Every finite thick Moufang
generalized polygon is flexible. The same holds for every finite thick elation
generalized quadrangle and its dual. For a finite projective plane with
nonflexible incidence graph, the full automorphism group of each one-vertex
double is
described explicitly, and
nonflexibility passes to every double. Proposition~\ref{prop:plane-perspectivity} gives the precise
projective-plane criterion. The incidence graph is nonflexible if and only if
the plane has no nontrivial perspectivity. Baer's involution theorem, as stated in \cite[p.~878]{Cofman}, gives
a further restriction. At nonsquare order, the collineation group of such a plane has odd order and is
therefore solvable by the Feit--Thompson theorem
\cite[Chapter~I, \S1, p.~775]{FeitThompson}. Thus a finite perspectivity-free projective
plane would give a graphically discrete hyperbolic right-angled Coxeter group
whose defining graph and all one-vertex doubles are nonflexible.

For a finite simplicial graph $\Gamma$ with vertex set $S$ and a family
$(G_s)_{s\in S}$ of nontrivial finite groups, let $G_\Gamma$ denote the
corresponding graph product \cite[Chapter~3]{Green}. Its standard chamber graph
$\cC_\Gamma$ has vertex set $G_\Gamma$, with distinct vertices $g,h$ adjacent when
$g^{-1}h\in G_s\setminus\{1\}$ for some $s\in S$
\cite[Definitions~2.6 and~3.1]{Shepherd}. Put
\[
 T=\{s\in S:|G_s|>2\},\qquad \Gamma_0=\Gamma[S\setminus T].
\]
Here $\Gamma[A]$ denotes the subgraph induced by $A$. By the definition
of flexibility, a graph with empty vertex set is nonflexible. A vertex is
\emph{universal} if it
is adjacent to every other vertex. Kubena and Thomas determined the discreteness dichotomy for
automorphism groups of regular right-angled buildings
\cite[Theorem~1]{KT}. Theorem~E gives the compact-by-discrete classification
for the standard chamber graph.

\begin{theoremE}
The following conditions are equivalent.
\begin{enumerate}
\item $\Aut(\cC_\Gamma)$ is compact-by-discrete.
\item $\Aut(\cC_\Gamma)$ is discrete.
\item every vertex of $T$ is universal in $\Gamma$ and $\Gamma_0$ is
nonflexible.
\end{enumerate}
In particular, if $G_\Gamma$ is graphically discrete, then every vertex of
$T$ is universal in $\Gamma$ and $\Gamma_0$ is nonflexible.
\end{theoremE}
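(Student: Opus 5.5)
We prove (2)$\Rightarrow$(1) trivially, (1)$\Rightarrow$(3) by contrapositive via the stabilizer-orbit criterion \cite[Theorem~3.10(1),(2)]{MSSW}, and (3)$\Rightarrow$(2) by showing that a vertex stabilizer is trivial. The final sentence then follows from the definition of graphical discreteness: $G_\Gamma$ acts on $\cC_\Gamma$ by left multiplication, freely, cocompactly and properly, and $\cC_\Gamma$ is connected and locally finite.

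\emph{Obstruction from a nonuniversal vertex of $T$.} Suppose $t\in T$ and $u\in S$ are distinct and nonadjacent. Pick distinct $a,b\in G_t\setminus\{1\}$, which is possible since $|G_t|>2$. We build an automorphism fixing the ball of radius $R$ about $1$ but acting nontrivially farther out, for every $R$.

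The chamber graph is the chamber system of a right-angled building. Its $t$-panels are the cosets $gG_t$, which are cliques of size $|G_t|$. Fix a chamber $g$ at distance about $R$ from $1$, reached along a word ending in $u$, and consider the $t$-panel of $g$. Since $t$ and $u$ do not commute, this panel is separated from $1$ by its projection. Define $\phi$ to be the identity on the ``$t$-wing'' containing $\proj(1)$. On each other wing through this panel, permute the wings by swapping those corresponding to $a$ and $b$, extending along wing isomorphisms. This is the standard wing-swapping construction in right-angled buildings, as in Kubena--Thomas \cite{KT} and Haglund--Paulin.

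We then check that $\phi$ is a graph automorphism. Adjacency within a panel is clique adjacency, and any other adjacency lies inside a wing, so both are preserved. The stabilizer of $1$ therefore moves a chamber at distance about $R$. Composing with conjugates gives arbitrarily large finite orbits, so \cite[Theorem~3.10]{MSSW} rules out compact-by-discreteness.

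\emph{Obstruction from flexibility of $\Gamma_0$.} Take $s\in S\setminus T$ and a nontrivial $\phi\in\Aut(\Gamma_0)$ fixing $\operatorname{st}_{\Gamma_0}(s)$. If some vertex of $T$ is nonuniversal we are already done, so we may assume every $t\in T$ is universal. Then $G_\Gamma\cong G_{\Gamma_0}\times\prod_{t\in T}G_t$. Here $\prod_{t\in T}G_t$ is the graph product over the complete graph on $T$, that is, the direct product of the $G_t$. Correspondingly, $\cC_\Gamma$ is the Cartesian-type product of $\cC(W_{\Gamma_0},S\setminus T)$ with a complete-multipartite factor. Applying Theorem~A to the right-angled Coxeter factor gives White's first-$s$ automorphisms. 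We extend them by the identity on the finite factor, and (1) fails as in Theorem~A. The delicate point is that the product of graphs here is not literally Cartesian, since generators in $T$ commute with everything. One must check that the first-$s$ map respects the $T$-edges. We handle this by defining the extension coordinatewise.

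\emph{(3)$\Rightarrow$(2).} Assume (3), so $\cC_\Gamma$ decomposes as above. An automorphism fixing $1$ and its neighbours must preserve edge types (``colours''). We would show this by a clique/square characterisation: $s$-edges with $s\notin T$ lie in no triangle, whereas $t$-edges lie in triangles. So $\phi$ preserves the $T$-part versus the $\Gamma_0$-part. Restricting to the $\Gamma_0$-fibre, Theorem~A(3)$\Rightarrow$(2) shows $\phi$ is the identity there. On the finite factor, fixing the ball of radius one forces the identity, because each panel is fixed pointwise and commutation squares propagate this. Hence the stabilizer of the ball of radius one is trivial, and $\Aut(\cC_\Gamma)$ is discrete.

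I expect the main obstacle to be the colour-preservation step and the verification that wing swaps are genuine automorphisms of $\cC_\Gamma$ rather than only of the building. This is harder when $|G_t|=3$, where triangles also arise from panels, so one must distinguish the two kinds of clique carefully.
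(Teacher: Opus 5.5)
Your outline follows the same architecture as the paper's proof, but the case of a nonuniversal vertex $t\in T$ has a gap as written.

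\textbf{The wing-swap check.} Your verification at $\sigma=gG_t$ asserts that, apart from the clique $\sigma$, every adjacency lies inside a single wing. This is false as soon as $\operatorname{lk}_\Gamma(t)\neq\varnothing$. For $1\neq y\in G_{\operatorname{lk}(t)}$, the parallel panel $gyG_t$ meets every wing of $\sigma$. For instance, $gy\in X_t(g)$ and $gay=gya\in X_t(ga)$ are $t$-adjacent. An arbitrary choice of wing isomorphisms need not preserve such edges.

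The construction does work if you swap by $gaz\leftrightarrow gbz$. That is, use left multiplication by $gba^{-1}g^{-1}$ on $X_t(ga)$ and its inverse on $X_t(gb)$. Edges between distinct wings occur only in the panels $gyG_t$ with $y\in G_{\operatorname{lk}(t)}$. This map acts on each such panel as a transposition, so clique adjacency is preserved. Alternatively, invoke Caprace's extension theorem \cite[Proposition~4.2]{Caprace}, as the paper does.

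\textbf{The orbit step.} An automorphism fixing a large ball only reproves the Kubena--Thomas nondiscreteness. What rules out compact-by-discreteness is your sentence ``composing with conjugates gives arbitrarily large finite orbits,'' and that needs an argument. Choose $u_1\in G_u\setminus\{1\}$, put $y=(au_1)^k$, and apply the swap at each panel $(au_1)^iG_t$ for $0\leq i<k$. Each swap fixes $1$, since $1$ projects to $(au_1)^i$. Each sends $y$ to $(au_1)^i\,b\,u_1(au_1)^{k-i-1}$. Since $t$ and $u$ are nonadjacent, these elements lie in $G_t*G_u$. Free-product normal forms show that the $k$ images and $y$ are pairwise distinct. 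This is what Proposition~\ref{prop:thick-orbits} does, via projections in the rank-two tree.

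\textbf{The direct-product half.} This part is correct, and the obstacles you anticipate there are not real. The product is literally Cartesian: $\cC_\Gamma=A\mathbin{\square}B$ with $A=\mathop{\mathbin{\square}}_{t\in T}K_{q_t}$ and $B=\cC(W_{\Gamma_0},S\setminus T)$. This holds because $G_\Gamma$ is a direct product and the generating set is the disjoint union of the factors' generating sets. The finite factor is a Cartesian product of cliques, not a complete multipartite graph. So extending first-$s$ automorphisms by the identity on $A$ needs no further check.

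For colour separation, suppose $x\sim y\sim z\sim x$ with $x^{-1}y\in G_r$ and $y^{-1}z\in G_{r'}$ for $r\neq r'$. Then $x^{-1}z$ has syllable length two, so $x$ and $z$ are not adjacent. Hence every triangle lies in a single panel, and triangles occur exactly in $T$-panels, including when $q_t=3$. This is the content of Lemma~\ref{lem:triangle-product}.

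One small point remains. Theorem~A(3)$\Rightarrow$(2) gives only that $\Aut(B)$ is discrete, not that fixing the radius-one ball forces the identity. You can either use the Haglund--Paulin restriction isomorphism behind that theorem, or replace the ball by $(A\times\{1\})\cup(\{1\}\times F)$. Here $F\subseteq B$ is a finite set with trivial pointwise stabilizer in $\Aut(B)$.
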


Finally, Theorem~A applies to a family of hyperbolic right-angled Coxeter
groups with Menger curve boundary. Let
$O_n=\KG(2n-1,n-1)$ be the $n$-th odd graph, where $\KG(N,r)$ denotes the
Kneser graph whose vertices are the $r$-element subsets of an $N$-element set
and whose edges join disjoint subsets \cite[p.~5]{EkinciGauci}.

\begin{corollaryF}
For every $n\geq3$, the right-angled Coxeter group $W_{O_n}$ is hyperbolic,
has Gromov boundary homeomorphic to the Menger curve, and is not graphically
discrete.
\end{corollaryF}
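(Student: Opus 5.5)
The proof has three parts: hyperbolicity, the Menger curve boundary, and the failure of graphical discreteness. Put $\Gamma=O_n$, $n\geq3$.

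For hyperbolicity I will use Moussong's criterion. A right-angled Coxeter group $W_\Gamma$ is hyperbolic if and only if $\Gamma$ has no induced square. Two vertices of $O_n$ are $(n-1)$-subsets of a $(2n-1)$-set. Two disjoint $(n-1)$-subsets leave exactly one element uncovered. So if $A,B$ are nonadjacent and have two common neighbours $C,D$, then $C$ and $D$ both lie in the complement of $A\cup B$. Since $|A\cup B|\geq n$, that complement has at most $n-1$ elements, which forces $C=D$. Hence $O_n$ has girth at least $5$ (in fact $6$ for $n\geq3$), and $W_{O_n}$ is hyperbolic.

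For the Menger curve I will invoke the Dani--Haulmark--Walsh criterion: for a triangle-free, square-free defining graph, the boundary is the Menger curve as soon as $\Gamma$ is non-planar, has no separating complete subgraph (here: no cut vertex, no separating edge, no separating pair of adjacent vertices), and no separating vertex-plus-vertex or similar "cut pair" configuration. I will check these conditions as follows.
\begin{enumerate}
\item $O_n$ is $n$-regular and vertex-transitive, so its vertex connectivity equals its degree $n\geq3$. Removing an edge's two endpoints, or a vertex together with its star's suspension-type pieces, cannot disconnect it.
\item $O_n$ is non-planar, since it contains a subdivision of the Petersen graph $O_3$; alternatively one can use Euler's formula with girth $\geq5$ and $n\geq3$.
\item The remaining conditions of the Menger criterion follow from $3$-connectivity, triangle-freeness and non-planarity.
\end{enumerate}
This is the step I expect to be the main obstacle. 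The list of hypotheses in the Menger-boundary theorem of \cite{DHW} is fussy, and for each $n$ one needs a uniform argument. Vertex-transitivity together with the connectivity bound (Watkins/Mader: connected vertex-transitive graphs have connectivity at least $\frac23$ of the degree; edge-transitivity of Kneser graphs gives full connectivity $n$) should handle it.

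For non-graphical-discreteness I will apply Theorem~A, which requires showing that $O_n$ is flexible. Take a vertex $s=A$. Its closed star is $A$ together with the $n$ sets $X\setminus(A\cup\{x\})$ for $x\in X\setminus A$, where $X$ is the ground set with $|X\setminus A|=n$. Let $\sigma$ be a permutation of $X$ that fixes $X\setminus A$ pointwise and swaps two elements of $A$; this is possible since $|A|=n-1\geq2$. Then $\sigma$ induces a nontrivial automorphism of $O_n$: it moves any $(n-1)$-subset containing exactly one of the two swapped elements, and such subsets exist. This automorphism fixes $A$ setwise. It also fixes each neighbour $X\setminus(A\cup\{x\})$, because that set is contained in $X\setminus A$. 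So $O_n$ is flexible, Theorem~A applies to the Coxeter system $(W_{O_n},V O_n)$, and $W_{O_n}$ is not graphically discrete.
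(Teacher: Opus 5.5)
Your hyperbolicity and flexibility arguments are the same as the paper's. The Menger-curve step, however, has a genuine gap at $n=3$.

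The inseparability hypothesis in the Dani--Haulmark--Walsh criterion excludes four things: separating vertices, separating edges, cut pairs, and a \emph{separating vertex suspension}. In a triangle-free graph with no four-cycles, a vertex suspension is an induced path $a-c-b$, that is, a set of three vertices. Your item (3) claims its absence follows from $3$-connectivity, triangle-freeness and non-planarity. That implication is false in general, because $3$-connectivity only rules out separating sets of size at most two.

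For $n\geq4$, the $n$-connectivity of $O_n$ does settle the matter. But the Petersen graph $O_3$ is exactly $3$-connected and does have separating $3$-sets: the neighbourhood of any vertex isolates that vertex. So you must check directly that no induced path of length two separates $O_3$. The check is short. $S_5$ acts transitively on such paths, and deleting $\{1,2\},\{3,4\},\{1,5\}$ leaves the remaining seven $2$-subsets connected. This is exactly the additional verification the paper makes for $n=3$.

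There are also some smaller slips.
\begin{itemize}
\item Vertex-transitivity alone does not force vertex connectivity to equal the degree. What you need is edge-transitivity (Watkins), or the Kneser-graph connectivity theorem of Ekinci--Gauci that the paper cites.
\item The Petersen graph has girth $5$, not $6$.
\item The claim that $O_n$ contains a subdivided Petersen graph is unsupported and should be dropped. Your Euler-formula alternative does work. For $n=3$, though, it works only because $|V(O_3)|=10$: for a cubic graph of girth at least $5$, the inequality $\tfrac32|V|\leq\tfrac53(|V|-2)$ fails exactly when $|V|<20$.
\end{itemize}
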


Theorems~A, B, D and E are proved below as
Theorem~\ref{thm:flexible}, Theorem~\ref{thm:doubling-obstruction},
Theorem~\ref{thm:polygon-classification} and
Theorem~\ref{thm:chamber-classification}, respectively. Corollary~F is proved
below as Corollary~\ref{cor:menger-odd}. Corollary~C is located above.

\section{Preliminaries}

An isometric action is called \emph{geometric} if it is proper, cocompact and
continuous in the sense of \cite[Definition~2.22]{MSSW}. Finitely generated
groups carry the discrete topology \cite[Remark~2.23]{MSSW}. Unless stated otherwise,
geometric graph actions are on connected locally finite graphs. Defining graphs for
graph products and right-angled Coxeter groups are finite simplicial graphs and may be
disconnected. Automorphism groups of locally finite graphs carry the topology of
pointwise convergence on vertices. For a proper metric space $Y$, the group
$\Isom(Y)$ is equipped here with the compact-open topology. With this topology
it acts properly and continuously on $Y$
\cite[\S2.4, immediately after Remark~2.23]{MSSW}.

\begin{definition}[{\cite[\S3.1 and Proposition~3.6(3)]{MSSW}}]
A locally compact group $H$ is \emph{compact-by-discrete} if it has a compact
normal subgroup $K$ such that $H/K$ is discrete. Using the equivalent graph
action formulation, a finitely generated group $\Lambda$ is \emph{graphically discrete} if $\Aut(X)$ is
compact-by-discrete for every geometric action $\Lambda\curvearrowright X$ on
a connected locally finite graph.
\end{definition}

Margolis--Shepherd--Stark--Woodhouse give the following compact-by-discrete
criterion.

\begin{theorem}[{\cite[Theorem~3.10(1),(2)]{MSSW}}]
\label{thm:MSSW-orbits}
Let $X$ be a connected locally finite graph, and let $H\leq\Aut(X)$ be a
closed subgroup acting cocompactly on $X$.  Then $H$ is compact-by-discrete if and only if the
cardinalities
\[
 |H_x\cdot y|,\qquad x,y\in V(X),
\]
are uniformly bounded.
\end{theorem}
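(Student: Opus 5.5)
The plan is to prove the two implications separately, using throughout that vertex stabilizers $H_x$ are compact open subgroups of $H$. This holds because $X$ is locally finite and $H\leq\Aut(X)$ is closed in the permutation topology. We also use that cocompactness gives finitely many $H$-orbits of vertices, and that $|H_x\cdot y|=[H_x:H_x\cap H_y]$.

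For the forward direction, suppose $K\trianglelefteq H$ is compact and $H/K$ is discrete, so $K$ is open. I first bound the $K$-orbits. Each $K\cdot y$ is finite because $K$ is compact and point stabilizers are open. Since $K$ is normal, $|K\cdot hy|=|hK\cdot y|=|K\cdot y|$. Hence $|K\cdot y|$ depends only on the $H$-orbit of $y$, and is bounded by some $B_1$ over the finitely many orbits. Next I bound an index. The image of the compact group $H_x$ in the discrete group $H/K$ is finite, so $[H_x:H_x\cap K]<\infty$. Since $H_{hx}=hH_xh^{-1}$ and $K$ is normal, this index is constant on $H$-orbits of $x$, hence bounded by some $B_2$. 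Writing $H_x=\bigcup_{i\le B_2}g_i(H_x\cap K)$ gives $H_x\cdot y\subseteq\bigcup_i g_iK\cdot y$, so $|H_x\cdot y|\le B_1B_2$.

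For the converse, suppose $|H_x\cdot y|\le N$ for all $x,y$. Fix a vertex $x$ and put $U=H_x$. For every $g\in H$ we have $gUg^{-1}=H_{gx}$, so
\[
 [U:U\cap gUg^{-1}]=|H_x\cdot gx|\le N .
\]
Thus $U$ is a uniformly commensurated subgroup of $H$. I will then invoke the Bergman--Lenstra theorem, which provides a normal subgroup $M\trianglelefteq H$ commensurable with $U$.

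It remains to turn $M$ into a compact open normal subgroup. Let $K=\overline{M}$, which is normal in $H$. Since $M\cap U$ has finite index in $M$, the group $M$ is covered by finitely many left cosets of the compact set $U$. Therefore $K$ lies in a compact set and is compact. Since $M\cap U$ also has finite index in $U$, the group $\overline{M\cap U}$ is a closed finite-index subgroup of $U$. It is therefore open in $U$, hence open in $H$, and it is contained in $K$. So $K$ is open and $H/K$ is discrete, which shows $H$ is compact-by-discrete.

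The main obstacle is the converse, specifically producing a normal subgroup commensurable with $H_x$ from the uniform orbit bound. I would rely on Bergman--Lenstra as a black box rather than reprove it. If a self-contained argument were wanted, I would first try the finite-intersection construction from Bergman--Lenstra's proof, which intersects $U$ with the conjugates of $U$ that minimize the index.
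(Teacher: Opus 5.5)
The paper does not prove this statement. It quotes it from \cite[Theorem~3.10(1),(2)]{MSSW} and, as it remarks, only ever uses the contrapositive of the forward implication: unbounded stabilizer orbits rule out compact-by-discreteness. That contrapositive is applied in Theorem~\ref{thm:flexible} and Proposition~\ref{prop:thick-orbits}. So there is no in-paper argument to compare against.

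Your proof is correct.

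Your forward direction is elementary and self-contained, and it is the only part the paper actually needs. Normality of $K$ makes both $|K\cdot y|$ and $[H_x:H_x\cap K]$ constant on $H$-orbits. Cocompactness, through finitely many vertex orbits, then bounds them uniformly. The covering $H_x\cdot y\subseteq\bigcup_i g_iK\cdot y$ finishes the argument.

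Your converse is also sound. You use the identity $[H_x:H_x\cap H_{gx}]=|H_x\cdot gx|\le N$ to show that $U=H_x$ is uniformly commensurated. You then import the Bergman--Lenstra theorem (the group-theoretic form of Schlichting's theorem) as a black box, which is where the real depth lies. Passing from $M$ to $\overline{M}$ works as you say. Covering $M$ by finitely many cosets of the compact set $U$ gives compactness. The closed finite-index subgroup $\overline{M\cap U}$ of $U$ is open, which gives openness. Note that this direction does not use cocompactness at all, so it holds for any closed subgroup of $\Aut(X)$.

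One small point of bookkeeping: compactness of $\Aut(X)_x$, and hence of $H_x$, uses connectedness of $X$ as well as local finiteness. You cite only local finiteness and closedness. In a disconnected locally finite graph, a vertex stabilizer can permute infinitely many components and fail to be compact.
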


Only the contrapositive will be used. In this setting, unbounded stabilizer
orbits rule out compact-by-discreteness.

A second criterion comes from quasi-isometries. A finitely generated group is
\emph{tame} in the sense of \cite[Definition~2.28]{MSSW} if quasi-isometries
with fixed constants are uniformly close whenever they lie at finite distance
from one another. Every acylindrically hyperbolic group is tame by
\cite[Example~2.29]{MSSW}. In particular, every non-elementary hyperbolic
group is tame. The same authors also prove the following graphical-discreteness
criterion.

\begin{theorem}[{\cite[Theorem~3.27]{MSSW}}]
\label{thm:MSSW-QI}
Let $\Lambda$ be a finitely generated tame group. If the image of the
left-multiplication homomorphism
\[
 \Lambda\longrightarrow\QI(\Lambda)
\]
has finite index, then $\Lambda$ is graphically discrete.
\end{theorem}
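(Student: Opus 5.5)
The plan is to fix a geometric action $\Lambda\curvearrowright X$ on a connected locally finite graph and to produce a compact normal subgroup $K\trianglelefteq\Aut(X)$ that is also open. Then $\Aut(X)/K$ is discrete. The natural candidate is the kernel of the map from $\Aut(X)$ to quasi-isometries.

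\emph{The homomorphism and its kernel.} By the Švarc--Milnor lemma, an orbit map $\Lambda\to X$, $\lambda\mapsto\lambda x_0$, is a quasi-isometry. It induces an isomorphism $\QI(X)\cong\QI(\Lambda)$. Every graph automorphism is an isometry of the vertex set, so we obtain a homomorphism
\[
 \rho\colon\Aut(X)\to\QI(X)\cong\QI(\Lambda),
\]
whose restriction to $\Lambda$ is the left-multiplication homomorphism. Set $K=\ker\rho$. This is the set of automorphisms at finite distance from $\mathrm{id}_X$, and it is normal.

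\emph{$K$ is compact.} This step uses tameness, and I expect it to be the main point needing care. Each $g\in\Aut(X)$ is conjugated through the orbit quasi-isometry to a quasi-isometry of $\Lambda$. The constants of these quasi-isometries depend only on the Švarc--Milnor constants, not on $g$, because $g$ is an isometry. Tameness says that quasi-isometries with fixed constants lying at finite distance from one another are uniformly close. Applying this to $g$ and the identity, I get a constant $C$ with
\[
 K=\{g\in\Aut(X): d(x,gx)\leq C\ \text{for all}\ x\in V(X)\}.
\]
The right-hand side is closed in the topology of pointwise convergence. By local finiteness, each vertex has only finitely many possible images, so this set lies in a product of finite sets and is compact by Tychonoff. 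Hence $K$ is a compact normal subgroup.

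\emph{$K$ is open.} By hypothesis $\rho(\Lambda)$ has finite index in $\QI(\Lambda)$, so $\rho(\Lambda)$ has finite index in $\rho(\Aut(X))$. Hence $\Lambda K$ has finite index in $\Aut(X)$, and $\Aut(X)=\bigcup_{i=1}^n g_i\Lambda K$. This writes $\Aut(X)$ as a countable union of compact sets $g_i\lambda K$. Since $\Aut(X)$ is locally compact, the Baire category theorem shows that some $g_i\lambda K$ has nonempty interior. Translating, $K$ has nonempty interior, so the subgroup $K$ is open. Therefore $\Aut(X)/K$ is discrete, and $\Aut(X)$ is compact-by-discrete. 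Since $X$ was arbitrary, $\Lambda$ is graphically discrete.
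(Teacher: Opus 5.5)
Your argument is correct. The paper itself gives no proof of this statement. It is imported from Margolis--Shepherd--Stark--Woodhouse and used as a black box, in the introduction and in the proof of Theorem~\ref{thm:polygon-classification}, so there is no in-paper argument to compare against.

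Your proof is complete and self-contained, and it follows the natural route:
\begin{enumerate}
\item Tameness, applied to $\bar o\circ g\circ o$ and $\mathrm{id}_\Lambda$ and transported back using coboundedness of the orbit, gives every element of the kernel $K$ of $\Aut(X)\to\QI(X)$ uniformly bounded displacement. This makes $K$ compact.
\item Finite index, countability of $\Lambda$ and the Baire category theorem then make the compact normal subgroup $K$ open.
\end{enumerate}

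Two small points should be made explicit.

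First, the Tychonoff step as written only places $K$ inside the compact set $\prod_x B(x,C)\subseteq V(X)^{V(X)}$. To get compactness in $\Aut(X)$ you also need $K$ to be closed there. This holds because $K=K^{-1}$ and $d(x,g^{-1}x)=d(gx,x)\leq C$, so pointwise limits of elements of $K$ remain bijective automorphisms. Alternatively, note that $K$ is a closed subset of $\{g\in\Aut(X): d(gx_0,x_0)\leq C\}$, which is a finite union of cosets of the compact stabilizer $\Aut(X)_{x_0}$.

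Second, a proper action may have a finite kernel. So ``the restriction of $\rho$ to $\Lambda$'' and ``$\Lambda K$'' should be read through the image of $\Lambda$ in $\Aut(X)$. This changes nothing, since the Baire argument only uses that this image is countable.
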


\begin{lemma}\label{lem:closed-subgroup}
Every closed subgroup of a compact-by-discrete locally compact group is
compact-by-discrete.
\end{lemma}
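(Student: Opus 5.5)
The plan is to intersect the given compact normal subgroup with the closed subgroup and check the two required properties directly. Let $H$ be locally compact and compact-by-discrete, with compact normal $K\trianglelefteq H$ such that $H/K$ is discrete, and let $L\leq H$ be closed. A closed subgroup of a locally compact group is locally compact, so $L$ is a locally compact group in the subspace topology. The natural candidate is $K_L=K\cap L$.

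First, $K_L$ is normal in $L$. For $\ell\in L$ we have $\ell K\ell^{-1}=K$ and $\ell L\ell^{-1}=L$, hence $\ell(K\cap L)\ell^{-1}=K\cap L$.

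Second, $K_L$ is compact. It is a closed subset of the compact set $K$, because $L$ is closed in $H$. It is therefore compact in $H$, and hence also in the subspace topology of $L$.

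Third, $L/K_L$ is discrete, which amounts to showing that $K_L$ is open in $L$. Since $H/K$ is discrete and the quotient map $H\to H/K$ is continuous, the singleton $\{K\}$ has open preimage, so $K$ is open in $H$. Then $K\cap L$ is open in $L$. For an open subgroup $U$ of a topological group $L$, every coset $\ell U$ is open, so each point of $L/U$ has open preimage under the quotient map. Every subset of $L/U$ is therefore open, and $L/U$ is discrete.

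No step is a real obstacle. The one point needing care is the equivalence between ``$H/K$ discrete'' and ``$K$ open in $H$''. This uses only the definition of the quotient topology, in which a set is open exactly when its preimage is open, and it does not require local compactness. Local compactness matters only for ensuring that $L$ belongs to the class of groups in the definition of compact-by-discrete, and closedness of $L$ gives exactly that.
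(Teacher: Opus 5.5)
Your proof is correct and follows the paper's argument: $K$ is open in $H$ because $H/K$ is discrete, so $L\cap K$ is a compact open normal subgroup of $L$, and hence $L/(L\cap K)$ is discrete. You additionally spell out two points the paper leaves implicit: compactness of $L\cap K$ uses that $L$ is closed, and openness of $K$ follows from the definition of the quotient topology.
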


\begin{proof}
Let $K\trianglelefteq H$ be compact with $H/K$ discrete, and let $L\leq H$ be
closed.  The subgroup $K$ is open in $H$, so $L\cap K$ is a compact open
normal subgroup of $L$.  Thus $L/(L\cap K)$ is discrete.
\end{proof}

The next permanence fact concerns finite direct factors.

\begin{lemma}\label{lem:finite-factor}
If $F$ is finite and $\Lambda$ is finitely generated, then $F\times\Lambda$ is
graphically discrete if and only if $\Lambda$ is graphically discrete.
\end{lemma}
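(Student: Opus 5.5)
We prove both directions through the graph-action formulation of graphical discreteness, with Theorem~\ref{thm:MSSW-orbits} as the main tool. Write $\Lambda\leq F\times\Lambda$ as the second factor.

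\emph{The easy direction.} Suppose $\Lambda$ is graphically discrete, and let $F\times\Lambda\curvearrowright X$ be a geometric action on a connected locally finite graph. Since $\Lambda$ has finite index in $F\times\Lambda$, the restricted action $\Lambda\curvearrowright X$ is still proper and cocompact, hence geometric. By hypothesis $\Aut(X)$ is compact-by-discrete. The group $\Aut(X)$ depends only on $X$, not on the acting group, so this direction is exactly the general observation, already used in the introduction, that graphical discreteness passes from a finite-index subgroup to the ambient group.

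\emph{The converse.} Suppose $F\times\Lambda$ is graphically discrete, and let $\Lambda\curvearrowright X$ be geometric. We build a graph $Y$ on which $F\times\Lambda$ acts geometrically and whose automorphism group controls $\Aut(X)$. The natural choice is $Y=X\times K_F$, with the following ingredients.
\begin{itemize}
\item $K_F$ is the complete graph on the set $F$ (a single vertex if $F$ is trivial).
\item The edges of $Y$ are the edges of the Cartesian product, so $(x,a)\sim(x',a)$ when $x\sim x'$, and $(x,a)\sim(x,b)$ when $a\neq b$.
\item $F$ acts by left multiplication on the $K_F$ coordinate and $\Lambda$ acts on the $X$ coordinate.
\end{itemize}
This action is proper and cocompact, and $Y$ is connected and locally finite. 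Hence $\Aut(Y)$ is compact-by-discrete.

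Now embed $\Aut(X)$ into $\Aut(Y)$ by $\alpha\mapsto\alpha\times\mathrm{id}$. The image is closed: it is the stabilizer of the partition of $V(Y)$ into the fibres $\{x\}\times F$, intersected with the automorphisms that fix the $F$-coordinate. Both of these are closed conditions in the pointwise-convergence topology. Lemma~\ref{lem:closed-subgroup} then shows $\Aut(X)$ is compact-by-discrete.

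\emph{Alternative argument for the converse.} If one prefers not to check closedness, Theorem~\ref{thm:MSSW-orbits} works directly. Given $x,y\in V(X)$, every element of $\Aut(X)_x$ induces an element of $\Aut(Y)_{(x,1)}$. This gives
\[
|\Aut(X)_x\cdot y|\leq|\Aut(Y)_{(x,1)}\cdot(y,1)|,
\]
and the right-hand side is uniformly bounded because $\Aut(Y)$ is compact-by-discrete and acts cocompactly. Since $\Aut(X)$ is closed and contains $\Lambda$, it acts cocompactly on $X$. Theorem~\ref{thm:MSSW-orbits} then gives compact-by-discreteness.

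\emph{Expected obstacle.} The one point to verify carefully is that the map $\Aut(X)\to\Aut(Y)$ is a well-defined injective continuous homomorphism with closed image. It is well defined because $\alpha\times\mathrm{id}$ preserves both kinds of edges. Continuity and closedness follow because pointwise convergence on $V(Y)$ restricted to the slice $X\times\{1\}$ is exactly pointwise convergence on $V(X)$. The orbit-bound route avoids even this, so I would present that version.
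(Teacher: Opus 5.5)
Your proof is correct. Your forward direction, restricting to the finite-index subgroup $\Lambda$, is the same as the paper's.

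For the converse the paper takes a shorter route. Given a geometric action $\Lambda\curvearrowright X$, it lets $F\times\Lambda$ act on the same graph $X$ through the projection $F\times\Lambda\to\Lambda$. The vertex stabilizers are $F\times\Lambda_x$, which are still finite, so the action is geometric. The automorphism group in question is then $\Aut(X)$ itself, so the paper needs no auxiliary graph, no closed-embedding check, and no appeal to Lemma~\ref{lem:closed-subgroup} or Theorem~\ref{thm:MSSW-orbits}.

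Your construction $Y=X\mathbin{\square}K_F$ buys something different: $F$ acts freely, and faithfulness is preserved. If $\Lambda\curvearrowright X$ is faithful, then so is $F\times\Lambda\curvearrowright Y$. Your argument would therefore survive a convention requiring faithful geometric actions, whereas the paper's action has kernel $F$ and would not qualify.

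The price is that you must identify $\Aut(X)$ with a closed subgroup of $\Aut(Y)$. Your description of the image as the automorphisms preserving the $K_F$-coordinate is right, but it deserves its one-line justification. Such an automorphism acts on each layer $X\times\{a\}$ by some $\alpha_a\in\Aut(X)$. Since $K_F$-edges join only vertices with the same $X$-coordinate, all the $\alpha_a$ coincide. Alternatively, your orbit-bound route pays by using both directions of Theorem~\ref{thm:MSSW-orbits}.

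Under the paper's definition, which requires only properness and cocompactness, the trivial extension of the action to $F\times\Lambda$ already suffices.
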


\begin{proof}
Suppose first that $\Lambda$ is graphically discrete. The subgroup $\Lambda$
has finite index in $F\times\Lambda$. Restricting any geometric action of $F\times\Lambda$ to
$\Lambda$ is still geometric, so the full automorphism group of the underlying
graph is compact-by-discrete.

Conversely, if $\Lambda$ has a geometric action on $X$ witnessing failure of
graphical discreteness, let $F\times\Lambda$ act on the same graph through the
projection to $\Lambda$. Its stabilizers are $F\times\Lambda_x$, hence
finite. The action is proper and cocompact. The same graph $X$ is therefore a witness for
$F\times\Lambda$.
\end{proof}

\section{Main results}

\subsection{Flexible Coxeter systems}

Let $(W,S)$ be a Coxeter system of finite rank.  The standard undirected
Cayley graph $\cC(W,S)$ has vertex set $W$ and an edge from $w$ to $ws$ for
each $w\in W$ and $s\in S$.

\begin{definition}[{\cite[p.~1]{White}}]
The defining diagram of $(W,S)$ is
\emph{flexible} if there are $s\in S$ and
a nontrivial label-preserving automorphism $\phi$ of the diagram such that
$\phi(s)=s$ and $\phi(r)=r$ whenever $m_{sr}<\infty$. For
$(W_\Gamma,V\Gamma)$, this says exactly that some nonidentity graph
automorphism of $\Gamma$ fixes a closed star pointwise. Thus nonflexibility in
the right-angled case is precisely the star-rigid condition used in
\cite[Remark~4.12]{Shepherd}.
\end{definition}

\begin{theorem}\label{thm:flexible}
Let $(W,S)$ be a Coxeter system with $S$ finite. The following conditions are
equivalent.
\begin{enumerate}
\item $\Aut(\cC(W,S))$ is compact-by-discrete.
\item $\Aut(\cC(W,S))$ is discrete.
\item the defining diagram is nonflexible.
\end{enumerate}
If the diagram is flexible, then $W$ is not graphically discrete.
\end{theorem}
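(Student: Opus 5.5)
We will prove $(2)\Rightarrow(1)$, $(3)\Rightarrow(2)$ and $(1)\Rightarrow(3)$, the last by contraposition. The implication $(2)\Rightarrow(1)$ is trivial: take $K=\{1\}$. For $(3)\Rightarrow(2)$ we invoke Haglund--Paulin \cite[Corollaire~5.8 and Th\'eor\`eme~5.12]{HaglundPaulin}. An automorphism of $\cC(W,S)$ fixing the vertex $1$ and its neighbours fixes everything once the diagram is nonflexible. Hence the stabilizer of the ball of radius one about $1$ is trivial, and this ball stabilizer is an open subgroup, so $\Aut(\cC(W,S))$ is discrete. The final sentence of the theorem will follow from $(1)\Rightarrow(3)$. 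Indeed $W$ acts on $\cC(W,S)$ by left multiplication freely and cocompactly, and $\cC(W,S)$ is connected and locally finite because $S$ generates and is finite. So this action is geometric, and by the definition of graphical discreteness it witnesses that $W$ is not graphically discrete.

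For $\neg(3)\Rightarrow\neg(1)$, fix $s$ and a nontrivial label-preserving $\phi$ fixing $s$ and every $r$ with $m_{sr}<\infty$. Following White's first-$s$ construction \cite[Definition~21]{White}, let $P_s$ be the set of $w\in W$ whose reduced expressions may be chosen to begin with $s$, i.e.\ $\ell(sw)<\ell(w)$; this is one side of the wall of $s$. Define
\[
 \Phi(w)=\begin{cases} s\,\phi(s w) & w\in P_s,\\ w & w\notin P_s,\end{cases}
\]
where $\phi$ also denotes the induced automorphism of $W$. We need three facts.
\begin{enumerate}
\item $\Phi$ preserves the half-spaces: $\phi$ preserves length and $\phi(s)=s$, so $\ell(s\phi(v))=\ell(\phi(sv))$.
\item $\Phi$ is a bijection of order equal to that of $\phi$.
\item $\Phi$ preserves edges.
\end{enumerate}
For edges $w\sim wt$ inside a single half-space, $\Phi$ acts either as the identity or as the map $w\mapsto s\phi(s)\phi(s^{-1}\cdot)$, which sends $wt$ to $\Phi(w)\phi(t)$; labels are preserved. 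The edges crossing the wall are $\{w, sw\}$ with $w\notin P_s$ and $sw=wt$ for some reflection conjugate. The key point is that $w\notin P_s$ but $sw\in P_s$ forces $w^{-1}sw\in S$ to lie in the parabolic generated by $s$ and its diagram neighbours. We will prove this using the standard fact that the wall of $s$ meets the chamber $w$ only through a generator conjugate to $s$ inside the appropriate spherical residue. Since $\phi$ fixes that parabolic, the crossing edges are preserved. I expect this verification of crossing edges to be the main technical obstacle. If the direct argument gets messy, we will cite White \cite[Proposition~26]{White} for the fact that $\Phi$ is an automorphism, and only check the orbit growth ourselves.

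Now $\Phi$ fixes the vertex $1\notin P_s$ and moves some vertex $s\,r$ with $\phi(r)\neq r$. For $g\in W$, the conjugate $\Phi_g=g\Phi g^{-1}$ fixes $g$ and acts nontrivially exactly on $gP_s$. Choose $g_1,\dots,g_k$ so that the half-spaces $g_iP_s$ are pairwise disjoint and all avoid $1$; for instance, translate along a ray so that the nested walls are disjoint. This is possible when $W$ is infinite. If $W$ is finite, $\Aut(\cC)$ is finite and hence trivially compact-by-discrete; so flexibility would then have to be ruled out, which fails. We therefore treat this case separately: a finite $W$ with a flexible diagram must be handled by noting that the definition then yields a finite graph, and (1) holds trivially. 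We will check whether the theorem intends $W$ infinite, presumably by observing that flexibility forces some $m_{sr}=\infty$, since $\phi$ nontrivial requires a vertex not adjacent to $s$.

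The automorphisms $\Phi_{g_i}$ commute and generate $(\mathbb Z/n)^k$ inside the stabilizer of $1$. Choosing a vertex $y$ that lies in all the relevant supports along a chain of nested (rather than disjoint) half-spaces, the orbit of $y$ under this stabilizer has size at least $2^k$. The stabilizer orbits are therefore unbounded, and since $\Aut(\cC)$ is closed and acts cocompactly, Theorem~\ref{thm:MSSW-orbits} shows that $\Aut(\cC(W,S))$ is not compact-by-discrete.
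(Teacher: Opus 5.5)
Your treatment of $(3)\Rightarrow(2)$, $(2)\Rightarrow(1)$ and of the final sentence is fine. The flexible direction, however, has a genuine gap: the map $\Phi$ you define is not White's first-$s$ map, and it is not a graph automorphism for general Coxeter systems. Since $\phi(s)=s$, you have $s\phi(sw)=\phi(w)$, so $\Phi$ is simply $\phi$ on $P_s$ and the identity off $P_s$. A crossing edge $\{w,sw\}$ with $w\notin P_s$ is sent to $\{w,s\phi(w)\}$, which is an edge only if $w^{-1}s\phi(w)\in S$. This is automatic in the right-angled case, where such $w$ lie in $W_{\operatorname{lk}_\Gamma(s)}$ and are fixed by $\phi$. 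Your ``key point'' fails as soon as $s$ carries an odd label, because the wall of $s$ then runs along panels of other types on which $\phi$ acts.

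Here is a concrete counterexample. Let $S=\{s,r,t,u\}$ with $m_{sr}=m_{rt}=m_{ru}=3$ and all other labels $\infty$, and let $\phi$ swap $t$ and $u$; this diagram is flexible at $s$. For $w=rstr$ one has $\ell(w)=4$ and $sw=srstr=rstrt=wt$ of length $5$. So $w\notin P_s$, $sw\in P_s$, and $w^{-1}sw=t\notin W_{\{s,r\}}$. But $w^{-1}\Phi(wt)=t\,w^{-1}\phi(w)=trtur$. Its braid class $\{trtur,rtrur,rturu\}$ contains no word with two equal adjacent letters, so it is reduced of length $5$, and $\Phi$ sends the edge $\{w,wt\}$ to a non-edge.

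Citing White's Proposition~26 does not repair this. That result concerns the different map $\Psi(w_1sw_2)=\phi(w_1)sw_2$, with $w_1$ free of $s$. This map fixes $P_s$ pointwise and does not act by $\phi$ on its complement: if $u=\phi(t)\neq t$, then $\Psi(tst)=ust$ while $\phi(tst)=usu$. The paper verifies directly, using Tits' word-reduction theorem and Matsumoto's theorem, that $\Psi$ is well defined and edge-preserving. The point is that any braid block meeting the first $s$ involves only letters fixed by $\phi$.

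Once $\Psi$ replaces $\Phi$, your support picture ``$\Phi_g$ acts exactly on $gP_s$'' is gone. The orbit count was not yet an argument anyway. Pairwise disjoint supports cannot all contain one vertex $y$, while nested supports need not commute, so the group $(\mathbb{Z}/n)^k$ and the bound $2^k$ are unsupported.

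Linear growth is enough, and the paper obtains it by an explicit computation. Here $m_{st}=m_{su}=\infty$ necessarily, which also disposes of your finite-$W$ worry. Put $w_n=(st)^ns$ and $\Psi_n=L_{w_n}\circ\Psi\circ L_{w_n}^{-1}$; each $\Psi_n$ fixes $1$. For $y_k=(st)^k$ and $0\leq n<k$ one gets $\Psi_n(y_k)=(st)^n(su)(st)^{k-n-1}$. Free-product normal forms in $W_{\{s,t,u\}}=\langle s\rangle*W_{\{t,u\}}$ then show that $y_k,\Psi_0(y_k),\dots,\Psi_{k-1}(y_k)$ are distinct, and Theorem~\ref{thm:MSSW-orbits} applies.
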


\begin{proof}
Assume that the diagram is nonflexible. Haglund--Paulin's restriction
isomorphism and rigid-case theorem show that the stabilizer of $1$ in
$\Aut(\cC(W,S))$ is isomorphic to the finite Coxeter-diagram automorphism
group $G(W,S)$
\cite[Corollaire~5.8 and Th\'eor\`eme~5.12]{HaglundPaulin}. Point stabilizers
are open in the pointwise-convergence topology, so this finite stabilizer
forces $\Aut(\cC(W,S))$ to be discrete. Thus (3) implies~(2).
Condition~(2) implies~(1), since a discrete group is compact-by-discrete. It
remains to show that flexibility rules out~(1).

Choose $s$ and $\phi$ as in the definition. Since $\phi$ is nontrivial, there
is $t\in S$ such that $u=\phi(t)\neq t$. If $m_{st}<\infty$, then $\phi(t)=t$,
a contradiction. Thus $m_{st}=\infty$. Since $\phi(s)=s$ and $\phi$ is
injective, $u\neq s$. Label preservation then gives $m_{su}=\infty$. In
particular, the flexible case forces $W$ to be infinite.

First consider a Cayley-graph automorphism. Following White's first-$s$
construction \cite[Definition~21]{White}, define $\Psi$ on reduced words as
follows. If a reduced word contains $s$, write it as
\[
 w=w_1sw_2,
\]
where $w_1$ contains no $s$, and set
\[
 \Psi(w)=\phi(w_1)sw_2.
\]
If $w$ contains no $s$, set $\Psi(w)=\phi(w)$.

First note that the displayed image word is reduced. Suppose otherwise. By
Tits' word-reduction theorem \cite[Theorem~3.4.2]{Davis}, choose a shortening
sequence and stop at its first deletion. All preceding steps are braid moves.
Pull them back across the first-$s$ cut. A move before the cut pulls back under
$\phi^{-1}$, one after the cut is unchanged, and a move meeting the cut has
other generator $r$ with $m_{sr}<\infty$, hence $\phi(r)=r$. This gives a
reduced word $w'$ braid-equivalent to $w$ such that $\Psi(w')$ contains an
adjacent equal pair. Before the cut, $\phi^{-1}$ pulls this pair back to $w'$.
After the cut it is unchanged. If it meets the cut, injectivity and
$\phi(s)=s$ force an adjacent $ss$ in $w'$. Each case contradicts reducedness.

It remains to show that the definition is independent of the chosen reduced
expression. By Matsumoto's theorem, again
\cite[Theorem~3.4.2]{Davis}, two reduced expressions for the same element
differ by braid moves. Consider one such move. If its alternating block lies before
the first $s$, applying $\phi$ to the block carries the braid relation to the
corresponding braid relation because $\phi$ preserves Coxeter labels. If the
block lies after the first $s$, the same braid move is left unchanged. Finally,
if the block contains the first $s$, then its other letter $r$ satisfies
$m_{sr}<\infty$, so $\phi(r)=r$. The entire block is therefore fixed by
$\phi$, even when an odd braid move changes the position of the first $s$.
Thus $\Psi$ respects every braid move and descends to a map on $W$.
Replacing $\phi$ by $\phi^{-1}$ gives the inverse of $\Psi$.

The map also preserves adjacency. If $wr$ is reduced, then either $w$ already
contains $s$, in which case $\Psi(wr)=\Psi(w)r$, or $w$ contains no $s$. In
the latter case, $\Psi(wr)=\Psi(w)s$ when $r=s$, and
$\Psi(wr)=\Psi(w)\phi(r)$ when $r\neq s$. If $wr$ is not reduced, apply the
same argument to $w=(wr)r$. Hence $\Psi$ and its inverse preserve edges, so
$\Psi\in\Aut(\cC(W,S))$ and $\Psi(1)=1$.

For $n\geq0$, put
\[
 w_n=(st)^n s,
 \qquad
 \Phi_n=L_{w_n}\circ\Psi\circ L_{w_n}^{-1}.
\]
Since $m_{st}=\infty$, the displayed word for $w_n$ is reduced and
$w_n^{-1}=w_n$. Its first letter is $s$, so $\Psi(w_n)=w_n$. Therefore each
$\Phi_n$ fixes the identity.

For $k\geq1$, put $y_k=(st)^k$. If $0\leq n<k$, then
\[
 y_k=w_n\,t(st)^{k-n-1},
 \qquad
 w_n^{-1}y_k=t(st)^{k-n-1}.
\]
The first-$s$ prescription gives
\[
 \Psi\bigl(t(st)^{k-n-1}\bigr)=u(st)^{k-n-1}.
\]
When $n=k-1$, the argument of $\Psi$ is just $t$, so the formula applies as
written. Consequently
\[
 \Phi_n(y_k)=(st)^n(su)(st)^{k-n-1}.
\]
The standard parabolic subgroup on $\{s,t,u\}$ splits as
\[
 W_{\{s,t,u\}}=\langle s\rangle*W_{\{t,u\}},
\]
because no finite Coxeter relation joins $s$ to $t$ or $u$. In free-product
normal form, $y_k$ has $k$ successive $W_{\{t,u\}}$-blocks all equal to $t$,
whereas $\Phi_n(y_k)$ has the $(n+1)$-st such block equal to $u$. Since
$u\neq t$, the $k+1$ points
\[
 y_k,\ \Phi_0(y_k),\ldots,\Phi_{k-1}(y_k)
\]
are pairwise distinct. Hence
\[
 \left|\Aut(\cC(W,S))_1\cdot y_k\right|\geq k+1
 \qquad(k\geq1).
\]

The full automorphism group acts cocompactly on $\cC(W,S)$ because it
contains the vertex-transitive left action of $W$. Theorem~\ref{thm:MSSW-orbits}
therefore implies that $\Aut(\cC(W,S))$ is not compact-by-discrete. Finally,
the left action of $W$ on this graph is geometric, so the same graph witnesses
that $W$ is not graphically discrete.
\end{proof}

\subsection{Generalized polygons and quasi-isometric rigidity}

By \cite[\S1.2]{Shepherd}, a \emph{generalized $m$-gon} is a connected
bipartite graph of diameter $m$ and girth $2m$, and it is \emph{thick} if every
vertex has degree at least three. The equivalent incidence-geometric formulation,
in which ordinary $m$-gons are the apartments, is recalled in
\cite[\S2.2]{DeMedtsVanMaldeghem}. In particular, a thick generalized $3$-gon
is a projective plane \cite[\S1.2]{Shepherd}. By the Feit--Higman theorem, a finite
thick generalized $m$-gon with $m\geq3$ can occur only for
$m\in\{3,4,6,8\}$ \cite{FeitHigman}.
Let $\Gamma$ be a finite thick generalized $m$-gon with $m\geq3$. Since
$\Gamma$ is bipartite and hence triangle-free, its Davis complex
$\Sigma_\Gamma$ is a two-dimensional
CAT(0) cube complex with vertex set $W_\Gamma$ and $1$-skeleton
$\cC(W_\Gamma,V\Gamma)$
\cite[Proposition~7.3.4, Example~7.3.6 and Theorem~12.2.1(i)]{Davis}.

Bounds and Xie replace every Euclidean square of $\Sigma_\Gamma$ with a
regular hyperbolic quadrilateral of angle $\pi/m$. The resulting proper
metric space, denoted here by $X_\Gamma$ while Bounds and Xie retain
$\Sigma_\Gamma$, is a Fuchsian building \cite[Theorem~4.5]{BoundsXie}. Since
\cite[Definition~2.3]{BoundsXie} requires each edge to lie in at least three
chambers, $X_\Gamma$ is thick in the building-theoretic sense. The
$W_\Gamma$-action remains proper and cocompact.

\begin{theorem}\label{thm:polygon-classification}
Let $\Gamma$ be a finite thick generalized $m$-gon, where $m\geq3$, and let
$\Lambda$ be a finitely generated group quasi-isometric to $W_\Gamma$.  Then
\[
 \Lambda\text{ is graphically discrete}
 \quad\Longleftrightarrow\quad
 \Gamma\text{ is nonflexible}.
\]
\end{theorem}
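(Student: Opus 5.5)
The proof splits into the two implications, and both rest on quasi-isometric rigidity of the Fuchsian building $X_\Gamma$. Xie's theorem \cite{Xie}, as recalled in \cite[Theorem~2.4]{BoundsXie}, says that every self-quasi-isometry of a thick Fuchsian building lies at bounded distance from a unique isometry. Combining this with \cite[Corollary~1.2]{BoundsXie}, I will show that any finitely generated $\Lambda$ quasi-isometric to $W_\Gamma$ acts geometrically on $X_\Gamma$ by isometries. Concretely, a quasi-isometry $\Lambda\to W_\Gamma$ conjugates left multiplication into a quasi-action on $X_\Gamma$. Each element is straightened to an isometry, and uniqueness makes this a homomorphism $\Lambda\to\Isom(X_\Gamma)$ with finite kernel and cocompact image; properness follows from the uniform quasi-isometry constants.

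For the nonflexible implication I would follow the route sketched in the introduction. Xie's rigidity gives $\QI(W_\Gamma)\cong\Isom(X_\Gamma)$. The remaining task is to check that the image of $W_\Gamma$ has finite index in $\QI(W_\Gamma)$.

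1. Every isometry of $X_\Gamma$ preserves the building structure, so it maps vertices of $X_\Gamma$ to vertices. Since $m\ge 3$, the corners of the hyperbolic quadrilaterals have angle $\pi/m<\pi/2$. I would argue that these are exactly the points where the link is a thick generalized $m$-gon of girth $2m$ in angle $\pi/m$, whereas interior edge points have link a discrete set of at least three points. Hence every isometry permutes the cubical vertices $W_\Gamma$ and the edges, and so induces an automorphism of $\cC(W_\Gamma,V\Gamma)$.
2. This defines an injective map $\Isom(X_\Gamma)\to\Aut(\cC(W_\Gamma,V\Gamma))$. Injectivity holds because an isometry fixing all vertices fixes each quadrilateral, which is determined by its corners.
3. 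When $\Gamma$ is nonflexible, Theorem~\ref{thm:flexible} shows that the stabilizer of $1$ is finite. By vertex-transitivity of $W_\Gamma$, the group $W_\Gamma$ then has finite index in $\Aut(\cC(W_\Gamma,V\Gamma))$, hence in $\Isom(X_\Gamma)\cong\QI(W_\Gamma)$.
4. Next pass to finite-index subgroups $H\le W_\Gamma$, which are tame because they are non-elementary hyperbolic. Theorem~\ref{thm:MSSW-QI} makes $H$ graphically discrete. Commensurability then transfers this to $\Lambda$: either use the action on $X_\Gamma$ constructed above, or argue as in the introduction.

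For the flexible implication, I first construct a geometric action of $\Lambda$ on the graph $\cC(W_\Gamma,V\Gamma)$. Compose the homomorphism $\Lambda\to\Isom(X_\Gamma)$ with the map of step 1; this is the metric-strata step, recovering the Coxeter Cayley graph from the building metric. The resulting action is proper because point stabilizers in $\Isom(X_\Gamma)$ are compact and $\Lambda$ acts properly. It is cocompact because the image of $\Lambda$ is cocompact in $\Isom(X_\Gamma)$. By Theorem~\ref{thm:flexible}, flexibility makes $\Aut(\cC(W_\Gamma,V\Gamma))$ not compact-by-discrete, so $\Lambda$ is not graphically discrete.

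I expect the main obstacle to be the metric-strata lemma. One must show that isometries of $X_\Gamma$ send cube vertices to cube vertices, not merely to branching points of some other stratum. One must also show they send cube edges to geodesic edge segments, so that incidence and adjacency in $\cC(W_\Gamma,V\Gamma)$ are preserved. I would first try to classify points by the isometry type of their space of directions: interior points of chambers, edge-interior points, and vertices with link the generalized $m$-gon $\Gamma$. Edges are then the closures of components of the stratum where the link is a thick discrete set.
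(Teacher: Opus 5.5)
Your proposal is correct and follows essentially the paper's route: the Bounds--Xie geometric action of $\Lambda$ on $X_\Gamma$, the metric-strata lemma giving an injection $\Isom(X_\Gamma)\hookrightarrow\Aut(\cC(W_\Gamma,V\Gamma))$, Theorem~\ref{thm:flexible} for the flexible direction, and finite index in $\QI$ together with tameness and \cite[Theorem~3.27]{MSSW} for the nonflexible direction. The differences are minor: the paper runs your step~4 directly for $\Lambda$, since discreteness of $\Isom(X_\Gamma)$ forces the image of $\Lambda$ to have finite index there, so no commensurability is needed; also, at an edge-interior point the space of directions is the suspension of a $\deg_\Gamma(s)$-point set rather than the discrete set itself.
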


\begin{lemma}
\label{lem:metric-cubical}
Every metric isometry of $X_\Gamma$ preserves the cubical vertices, the open
cubical edges and the open squares.  Restriction to the vertex set therefore
defines a continuous injective homomorphism
\[
 \Isom(X_\Gamma)\longrightarrow
 \Aut\bigl(\cC(W_\Gamma,V\Gamma)\bigr).
\]
\end{lemma}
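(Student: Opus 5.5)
The plan is to identify the cubical strata of $X_\Gamma$ intrinsically from the piecewise hyperbolic metric, using local geometry (links and angles) that any isometry must preserve. In $X_\Gamma$ each point lies in one of three kinds of place. It may be in the interior of a hyperbolic quadrilateral, where a small neighbourhood is isometric to a hyperbolic disc. It may be in the interior of an edge. Or it may be a cubical vertex. Metric isometries preserve the space of directions (the link) at each point, so the aim is to distinguish these three strata by their links.

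At an interior point of a square the link is a circle of length $2\pi$. At an interior point of an edge $\{w,ws\}$, the link is a suspension of the finite set of squares containing that edge. Their number is $2\cdot|\operatorname{lk}_\Gamma(s)|\geq 6$ by thickness, since squares through the edge correspond to $r$ adjacent to $s$ with two quadrilaterals each. The link therefore has branching and is not a circle. At a cubical vertex $w$, the link is the graph $\Gamma$ itself, with each edge of length $\pi/m$, since the corner angle is $\pi/m$. This is a thick generalized $m$-gon, so every vertex has degree at least $3$.

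To separate vertices from edge points, I will use the fact that a point of the link of an edge point is either one of two isolated "poles" of valence $\geq3$ or lies on a geodesic arc of length $\pi$. In the vertex link, by contrast, the branch points are all vertices of $\Gamma$ at mutual distances multiples of $\pi/m<\pi$. Equivalently, the set of points with non-manifold (branching) link in $X_\Gamma$ is the union of closed edges. Among these, the cubical vertices are exactly the points at which this singular set fails to be locally a star of three or more segments through a single line. Concretely, at an edge point the singular set is locally an arc, while at a vertex $w$ it is locally a cone on the vertex set of $\Gamma$ with $|V\Gamma|\geq 2\cdot 3$ prongs, so it is not an arc. Since isometries preserve links up to isometry, they preserve the singular set and its locally non-arc points. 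Hence they permute cubical vertices, open edges (the components of the singular set minus vertices), and open squares (the components of the complement of the singular set).

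An isometry therefore induces a bijection of $W_\Gamma$ preserving adjacency, because two vertices are adjacent exactly when they bound a common open edge. This gives a homomorphism into $\Aut(\cC(W_\Gamma,V\Gamma))$. For injectivity, note that an isometry fixing every vertex fixes each edge, which is the unique geodesic between its endpoints in the CAT($-1$) space. It then fixes each quadrilateral, since its boundary is fixed pointwise and a hyperbolic polygon isometry fixing the boundary is trivial; so it is the identity. For continuity, a compact-open neighbourhood condition on the finite vertex set of a ball is exactly the basic open set of pointwise convergence, and vertices form a discrete subset, so convergence uniformly on compacta forces eventual agreement on finitely many vertices.

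The main obstacle is the careful link computation distinguishing edge points from vertices, in particular confirming that the vertex link is really isometric to $\Gamma$ with the metric $\pi/m$ per edge rather than something degenerate.
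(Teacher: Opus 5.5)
Your proposal is correct and follows essentially the paper's route: the strata are separated by their spaces of directions (your arc-versus-cone test on the singular set carries the same information as the paper's count of branch points in the link), open edges and open squares are recovered as components, and injectivity and continuity come from fixing vertices and from the discreteness of the vertex set. Two small repairs are needed. First, an edge of type $s$ lies in exactly $\deg_\Gamma(s)$ squares, one coset $wW_{\{s,r\}}$ for each $r\in\operatorname{lk}_\Gamma(s)$, not $2\deg_\Gamma(s)$; thickness still gives the needed $\geq 3$. Second, in the injectivity step you should say why each square is mapped to itself before invoking rigidity of the hyperbolic quadrilateral, either because a square is determined by its four vertices, as the paper notes, or because chords of a square are local, hence unique, geodesics in the CAT$(-1)$ space.
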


\begin{proof}
The three metric strata are distinguished by their spaces of directions. At
an interior point of a square, the space of directions is a circle. If a
cubical edge has type $s\in V\Gamma$, then it is contained in one square for
each $t\in\operatorname{lk}_\Gamma(s)$. Hence an interior point of that edge
has space of directions the suspension of a
$\deg_\Gamma(s)$-point set. Thickness gives $\deg_\Gamma(s)\geq3$, so this is
a graph with exactly two branch points. At a cubical vertex, the space of
directions is the metric realization of $\Gamma$, with every edge of length
$\pi/m$. The graph $\Gamma$ has more than two vertices and every one of them
has degree at least three, so this link has more than two branch points. Thus
the three strata are metric invariants.

It follows that every isometry preserves the cubical vertices and open edges.
The connected components of the complement of those strata are the open
squares. A cubical square is determined by its four vertices, so an isometry
fixing every cubical vertex preserves every square setwise. Its restriction to
a regular hyperbolic square fixes all four vertices and is therefore the
identity on that square. Restriction to the
vertex set is thus injective. The cubical vertex set is locally finite and
therefore discrete.
For every finite set of vertices $A$, there is $\varepsilon>0$ such that an
isometry moving each point of $A$ by less than $\varepsilon$ fixes $A$
pointwise.  Hence the preimage of every basic pointwise stabilizer in the
target is open in the compact-open topology, which proves continuity.
\end{proof}

\begin{proof}[Proof of Theorem~\ref{thm:polygon-classification}]
By Bounds--Xie quasi-isometric rigidity, $\Lambda$ acts geometrically on
$X_\Gamma$ \cite[Corollary~1.2]{BoundsXie}. Lemma~\ref{lem:metric-cubical}
shows that this action preserves the cubical vertex and edge strata and hence
restricts to an action on the standard Cayley graph
$\cC(W_\Gamma,V\Gamma)$. This restricted action is proper. Finite vertex
sets are compact subsets of $X_\Gamma$, so their transporter sets are finite
for the original proper action of the discrete group $\Lambda$. It is also cocompact.
Indeed, if $K\subseteq X_\Gamma$ is compact and $\Lambda K=X_\Gamma$, then $K$
meets the locally finite cubical vertex set in only finitely many points. Since
$\Lambda$ preserves that vertex set, those points meet every $\Lambda$-orbit
of cubical vertices.

If $\Gamma$ is flexible, Theorem~\ref{thm:flexible} shows that
$\Aut(\cC(W_\Gamma,V\Gamma))$ is not compact-by-discrete. The preceding
geometric action therefore witnesses that $\Lambda$ is not graphically
discrete.

Assume that $\Gamma$ is nonflexible. By Theorem~\ref{thm:flexible},
$\Aut(\cC(W_\Gamma,V\Gamma))$ is discrete. Lemma~\ref{lem:metric-cubical}
then implies that
$G=\Isom(X_\Gamma)$ is discrete. Let $J\leq G$ be the image of the action
homomorphism $\Lambda\to G$. Properness gives finite point stabilizers, hence a
finite kernel. Moreover, $J$ has finite index in $G$.
Indeed, choose a compact set $K\subseteq X_\Gamma$ with $JK=X_\Gamma$, and
fix $x\in X_\Gamma$. Since $X_\Gamma$ is proper, the full isometry group acts
properly, so
\[
 F=\{g\in G:gx\in K\}
\]
is compact and hence finite because $G$ is discrete. For each $g\in G$, choose
$j\in J$ with $j^{-1}gx\in K$. Then $j^{-1}g\in F$, so $G=JF$.

Every self-quasi-isometry of $X_\Gamma$ is at finite distance from an
isometry \cite[Theorem~1.1]{BoundsXie}. Hence the natural map
$G\to\QI(X_\Gamma)$ is surjective. Since $J$ has finite index in $G$, its
image has finite index in $\QI(X_\Gamma)$. An orbit map
$o:\Lambda\to X_\Gamma$ is a quasi-isometry \cite[p.~58]{MSSW}, and
$o\circ L_\lambda=\lambda\circ o$ for left multiplication $L_\lambda$.
Under the isomorphism $\QI(\Lambda)\cong\QI(X_\Gamma)$ induced by $o$,
the image of the left-multiplication homomorphism
$\Lambda\to\QI(\Lambda)$ is the image of $J$. It therefore has finite
index.

Finally, $\Gamma$ has no induced four-cycle, so $W_\Gamma$ is hyperbolic
\cite[Corollary~12.6.3]{Davis}. Each vertex has three pairwise nonadjacent
neighbours, which generate a special subgroup $C_2*C_2*C_2$. Hence
$W_\Gamma$ is non-elementary. Consequently every finitely generated group
quasi-isometric to $W_\Gamma$, including $\Lambda$, is non-elementary
hyperbolic and hence tame \cite[Example~2.29]{MSSW}. Theorem~\ref{thm:MSSW-QI}
completes the proof.
\end{proof}

\begin{remark}
Corollary~4.2 of \cite{MSSW} states that uniform lattices in thick locally
finite hyperbolic buildings are not graphically discrete. There is therefore a
conditional tension with Theorem~\ref{thm:polygon-classification}. For a finite
thick generalized $m$-gon $\Gamma$, the standard $W_\Gamma$-action on
$X_\Gamma$ is faithful and geometric. Since $X_\Gamma$ is proper,
\cite[Lemma~2.24]{MSSW} realizes $W_\Gamma$ as a uniform lattice in
$\Isom(X_\Gamma)$. Hence, if Corollary~4.2 holds as stated, applying
Theorem~\ref{thm:polygon-classification} with $\Lambda=W_\Gamma$ forces every
finite thick generalized $m$-gon with $m\geq3$ to be flexible. Equivalently,
the existence of a nonflexible such $\Gamma$ would give a counterexample to
Corollary~4.2. In particular, by
Proposition~\ref{prop:plane-perspectivity}, Corollary~4.2 would imply that every
finite projective plane of order at least two admits a nontrivial
perspectivity. Thus Corollary~4.2 and an affirmative answer to
Question~\ref{q:polygon-existence} cannot both hold.

The displayed proof of Corollary~4.2 uses the assertion that any two apartments
containing a fixed chamber can be interchanged by a global building
automorphism fixing that chamber pointwise. The standard building axiom
recalled in \cite[\S2]{Xie} supplies an isomorphism between two apartments
fixing their intersection. By itself, it does not supply an extension of that
apartment isomorphism to a global building automorphism. Thus the displayed
proof requires an additional apartment-extension assumption. This observation
concerns that proof only. No claim about the truth value of Corollary~4.2 is
made here.

The proof of Theorem~\ref{thm:polygon-classification} is independent of
Corollary~4.2 and of any apartment-extension property. It uses
\cite[Corollary~1.2]{BoundsXie}, Lemma~\ref{lem:metric-cubical},
\cite[Theorem~1.1]{BoundsXie}, and \cite[Theorem~3.27]{MSSW}.
\end{remark}

\begin{proposition}
\label{prop:structural-flexibility}
The incidence graph is flexible for every finite thick Moufang generalized
polygon and for every finite thick generalized quadrangle that is an elation
generalized quadrangle or the dual of one.
\end{proposition}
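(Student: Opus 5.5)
The plan is to exhibit, in each case, a nontrivial automorphism of the incidence graph $\Gamma$ that fixes a closed star $\operatorname{st}_\Gamma(v)$ pointwise. In incidence language, $\operatorname{st}_\Gamma(v)$ is a vertex $v$ (a point or a line) together with every element incident with it. So it suffices to find a nontrivial collineation or duality-free automorphism fixing $v$ and every element incident with $v$. Such automorphisms are precisely the elations and homologies centred at $v$ in the broad sense, that is, nontrivial elements of the group of ``central'' automorphisms with centre $v$. In each family I will produce a nontrivial automorphism fixing all elements at distance at most $1$ from a suitable vertex.

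\emph{Moufang case.} Let $\Gamma$ be a finite thick Moufang generalized $m$-gon, and fix a path $(x_0,x_1,\dots,x_m)$ of length $m$ in $\Gamma$ (a root). By the Moufang condition, the root group $U_{x_0,\dots,x_m}$ is nontrivial; thickness plus finiteness makes it of order at least $2$, in fact equal to the order parameter. By definition it fixes every element incident with one of $x_1,\dots,x_{m-1}$. In particular, for $m\ge3$ it fixes $x_1$ together with all its neighbours, since $1\le m-1$. A nonidentity element of $U$ is therefore a nontrivial automorphism fixing $\operatorname{st}_\Gamma(x_1)$ pointwise, so $\Gamma$ is flexible.

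\emph{Elation quadrangle case.} Let $\Gamma$ be an elation generalized quadrangle with base point $p$ and elation group $E$. Here $E$ fixes $p$ and every line through $p$, and acts regularly on points opposite $p$. Since the quadrangle is thick and finite, it has order $(s,t)$ with $s,t\ge2$, so $|E|=s^2t>1$ and a nonidentity $e\in E$ exists. This $e$ fixes $p$ and every neighbour of $p$ (the lines through $p$), so it fixes $\operatorname{st}_\Gamma(p)$ pointwise, and $\Gamma$ is flexible. For the dual, the incidence graph of the dual quadrangle is the same bipartite graph with the colour classes swapped. Flexibility is a property of the abstract graph, so it transfers immediately.

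\emph{Main obstacle.} The only delicate point is to check, with a precise citation, the fixed-structure statements. For root groups I need that $U_{x_0\dots x_m}$ fixes all elements incident with $x_1$, which is part of the standard definition (Van Maldeghem, Tits--Weiss). For elations I need that they fix all lines through $p$, which is part of the definition of an elation about $p$ (Payne--Thas). I also need nontriviality in the finite thick setting. For root groups this follows from the Moufang hypothesis, since these groups are required to act transitively on the thick set of apartments through the root. I would check that ``nontrivial'' is not vacuous here: the group acts transitively on at least two apartments containing the root, which thickness provides.
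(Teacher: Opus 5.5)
Your proposal is correct and follows essentially the same route as the paper. A nonidentity element of the root group $U_\alpha$ is nontrivial because $U_\alpha$ is transitive on the at least two apartments through the root, and it fixes $\operatorname{st}_\Gamma(x_1)$ pointwise; a nonidentity elation exists since $|E|=s^2t>1$ and fixes $\operatorname{st}_\Gamma(p)$ pointwise; and the dual case is the same incidence graph with the colour classes swapped. Your side remark identifying these automorphisms ``precisely'' with elations and homologies is loose, but nothing in the argument uses it.
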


\begin{proof}
For the Moufang case, let $\alpha=(x_0,\ldots,x_m)$ be a root. The root
group $U_\alpha$ consists of the elations fixing every element incident with
one of the interior vertices $x_1,\ldots,x_{m-1}$
\cite[\S3]{DeMedtsVanMaldeghem}. For a thick Moufang polygon, $U_\alpha$ is
nontrivial and acts regularly on the apartments through $\alpha$
\cite[\S3]{DeMedtsVanMaldeghem}. Hence any nonidentity element of
$U_\alpha$ fixes the closed star of each interior vertex $x_i$ pointwise.

For the elation case, let $p$ be the base point and let $E$ be an elation
group about $p$. By definition, $E$ fixes every line through $p$ and acts
regularly on the points not collinear with $p$
\cite[Chapter~8, pp.~138--139]{PayneThas}. If the quadrangle has order
$(s,t)$, the set of points not collinear with $p$ has cardinality $s^2t$.
Thickness gives $s,t\geq2$, so this cardinality is at least eight. Hence $E$ is
nontrivial. A nonidentity element of
$E$ fixes the closed star of the point vertex $p$ pointwise. Interchanging
points and lines gives the dual statement.
\end{proof}

Consequently, any example realizing the graphically discrete case of
Theorem~\ref{thm:polygon-classification} must be non-Moufang. In the
quadrangle case it can be neither an elation generalized quadrangle nor the
dual of one. These restrictions do not amount to a classification of all
finite generalized polygons.

\subsection{Coxeter doublings and finite-index witnesses}

Throughout this subsection, $\Gamma$ is a finite simplicial graph. For a
vertex $v\in V\Gamma$, write
\[
 \operatorname{st}_\Gamma(v)=\{v\}\cup\operatorname{lk}_\Gamma(v)
\]
for its closed star.

Fix $v\in V\Gamma$. The \emph{double} $D_v\Gamma$ consists of two copies of
$\Gamma\setminus\{v\}$ glued along $\operatorname{lk}_\Gamma(v)$
\cite[\S2.2, immediately before Lemma~2.7]{DHW}. Equivalently,
vertices in the link of $v$ have one copy, while every vertex outside
$\operatorname{st}_\Gamma(v)$ has two copies.  Define
\[
 \varepsilon_v:W_\Gamma\longrightarrow C_2
\]
by sending $v$ to the nontrivial element and every other standard generator
to the identity, and put $H_v:=\ker(\varepsilon_v)$. The Doubling Lemma
\cite[Lemma~2.7]{DHW} states that $W_{D_v\Gamma}$ occurs as an index-two
subgroup of $W_\Gamma$. For the explicit realization used below, apply
Reidemeister--Schreier with transversal $\{1,v\}$. For each $r\neq v$ the
resulting generators are $r$ and $r^*=vrv$. They coincide when
$r\in\operatorname{lk}_\Gamma(v)$. If $r\notin\operatorname{st}_\Gamma(v)$,
the special subgroup $W_{\{v,r\}}\cong C_2*C_2$ shows that $r^*\neq r$.
Rewriting the involution and commutation relators gives
\[
 r^2=(r^*)^2=1,\qquad [r,s]=[r^*,s^*]=1
 \quad\text{for }\{r,s\}\in E\Gamma,\ r,s\neq v.
\]
When a vertex lies in $\operatorname{lk}_\Gamma(v)$, the equality $r^*=r$
makes the link common to the two copies. No cross-copy commutation relation
occurs between two vertices outside the closed star. Thus this is
exactly the Coxeter presentation of $W_{D_v\Gamma}$. Hence
$H_v\cong W_{D_v\Gamma}$, with a link vertex represented by $r$ and the two
copies of each $r\notin\operatorname{st}_\Gamma(v)$ represented by $r$ and
$vrv$.

\begin{theorem}\label{thm:doubling-obstruction}
Let $\Gamma$ be a finite simplicial graph and let $v\in V\Gamma$. If
$D_v\Gamma$ is flexible, then $W_\Gamma$ is not graphically discrete.
\end{theorem}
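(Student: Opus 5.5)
The plan is to exhibit a single geometric action of $W_\Gamma$ on the standard Cayley graph $X=\cC(W_{D_v\Gamma},S_v)$ of the index-two subgroup $H_v\cong W_{D_v\Gamma}$. Here $S_v$ is the standard generating set described above: the link vertices $r$, together with the pairs $r, vrv$ for $r\notin\operatorname{st}_\Gamma(v)$. Once such an action exists, Theorem~\ref{thm:flexible} applied to the Coxeter system $(W_{D_v\Gamma},S_v)$ shows that $\Aut(X)$ is not compact-by-discrete, because $D_v\Gamma$ is flexible. By the definition of graphical discreteness, this graph then witnesses that $W_\Gamma$ is not graphically discrete.

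To build the action, identify the vertex set of $X$ with $H_v$, and let $H_v$ act by left multiplication. Let $v$ act by the map $\sigma(h)=vhv$. Conjugation by $v$ normalizes $H_v$, and by the explicit Reidemeister--Schreier realization it permutes $S_v$: it fixes each link vertex $r$ and swaps $r$ with $vrv$. This permutation is exactly the deck involution of $D_v\Gamma$, which is a graph automorphism. Consequently $\sigma$ sends the edge $\{h,hs\}$ to $\{vhv,\,vhv\cdot vsv\}$, which is again an edge, so $\sigma\in\Aut(X)$.

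Next, define the action of $W_\Gamma=H_v\sqcup vH_v$ by
\[
 h\cdot x=hx,\qquad (vh)\cdot x=v\,hx\,v \qquad(h\in H_v,\ x\in H_v).
\]
Equivalently, $g\cdot x=gx$ when $g\in H_v$ and $g\cdot x=gxv$ when $g\notin H_v$. This formula can be written as $g\cdot x=gx\,v^{\varepsilon_v(g)}$, where $\varepsilon_v(g)$ is read as $0$ or $1$. I will check that this is a homomorphism $W_\Gamma\to\Aut(X)$; the check reduces to $\varepsilon_v$ being a homomorphism together with $v^2=1$. Each element acts as an automorphism, since it is a composite of left multiplications and $\sigma$.

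It remains to confirm that the action is geometric. It is cocompact, even vertex-transitive, because $H_v$ alone already acts transitively. It is proper, since the stabilizer of a vertex has order at most two: its intersection with $H_v$ is trivial and $[W_\Gamma:H_v]=2$. The graph $X$ is connected and locally finite because $S_v$ is a finite generating set.

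The only step that needs real care is verifying that conjugation by $v$ induces precisely the deck involution on $S_v$ and therefore preserves the labelled Coxeter structure. That fact was already established in the Reidemeister--Schreier paragraph, so it should not be a genuine obstacle. Finally, I will note that Theorem~\ref{thm:flexible} concludes non-compact-by-discreteness of the full $\Aut(X)$, which is exactly what the definition of graphical discreteness requires, so no restriction to $W_\Gamma$-equivariant automorphisms is needed.
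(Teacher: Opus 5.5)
Your proposal is correct and follows the paper's proof essentially verbatim. The paper also lets $W_\Gamma=H_v\rtimes\langle v\rangle$ act on the standard Cayley graph of $H_v\cong W_{D_v\Gamma}$, with $v$ acting through the automorphism induced by conjugation (the deck involution); it then observes that vertex stabilizers have order at most two and invokes Theorem~\ref{thm:flexible}, and your formula $(vh)\cdot x=vhxv$ is the paper's action $(hv^\epsilon)\cdot x=h\alpha^\epsilon(x)$ written with the coset representative on the other side.
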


\begin{proof}
Let
\[
 X=\cC\bigl(W_{D_v\Gamma},V(D_v\Gamma)\bigr)
\]
be the standard Cayley graph of $H_v$.  Conjugation by $v$ fixes each standard
generator from $\operatorname{lk}_\Gamma(v)$ and interchanges the two
generators $r$ and $vrv$ associated to each vertex
$r\notin\operatorname{st}_\Gamma(v)$.  It therefore induces the deck
involution of $D_v\Gamma$ and an automorphism $\alpha$ of $X$.

The splitting $W_\Gamma=H_v\rtimes\langle v\rangle$ now gives an action on
$X$ by
\[
 (h v^\epsilon)\cdot x=h\alpha^\epsilon(x)
 \qquad(h,x\in H_v,\ \epsilon\in\{0,1\}).
\]
The subgroup $H_v$ acts simply transitively on the vertices, and the
stabilizer of the identity in $W_\Gamma$ is $\langle v\rangle$.
Thus the action of $W_\Gamma$ on $X$ is proper and cocompact.

If $D_v\Gamma$ is flexible, Theorem~\ref{thm:flexible} implies that
$\Aut(X)$ is not compact-by-discrete.  Hence this geometric action on $X$
witnesses that $W_\Gamma$ is not graphically discrete.
\end{proof}

For a component $C$ of $\Gamma\setminus\operatorname{st}_\Gamma(v)$, let
$\tau_C$ interchange the two copies of $C$ in $D_v\Gamma$ and fix every
other vertex.  Since distinct components have no edges between them,
$\tau_C$ is a graph automorphism.

\begin{corollary}
\label{cor:separating-star}
If $\Gamma\setminus\operatorname{st}_\Gamma(v)$ has at least two connected
components for some $v\in V\Gamma$, then $W_\Gamma$ is not graphically
discrete.
\end{corollary}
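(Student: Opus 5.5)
The plan is to reduce to Theorem~\ref{thm:doubling-obstruction}: we show that $D_v\Gamma$ is flexible, using the automorphism $\tau_C$ just defined.

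Let $C_1,\dots,C_k$, with $k\geq2$, be the components of $\Gamma\setminus\operatorname{st}_\Gamma(v)$. Take $\phi=\tau_{C_1}$. It interchanges the two copies of $C_1$ and fixes everything else, including both copies of $C_2$. It is nontrivial, because $C_1$ is nonempty and its two copies are distinct vertex sets of $D_v\Gamma$. It is a graph automorphism, as noted before the statement. The reason is that edges of $D_v\Gamma$ lie either inside a single copy of $\Gamma\setminus\{v\}$ or join a copy of a vertex to a common link vertex. Moreover, no edge joins $C_1$ to $C_j$ for $j\neq1$.

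Next choose a vertex $s$ in the first copy of $C_2$. We need $\phi$ to fix the closed star of $s$ in $D_v\Gamma$ pointwise. The neighbours of $s$ lie in the first copy of $\Gamma\setminus\{v\}$. Each neighbour is therefore either a link vertex or a vertex of the first copy of $C_2$, since a vertex of $\Gamma\setminus\operatorname{st}(v)$ adjacent to $s$ lies in the same component as $s$. Both kinds are fixed by $\phi$, and $s$ itself is fixed. So $\phi$ fixes $\operatorname{st}_{D_v\Gamma}(s)$ pointwise and $D_v\Gamma$ is flexible. Theorem~\ref{thm:doubling-obstruction} then shows that $W_\Gamma$ is not graphically discrete.

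The only point needing care is the bookkeeping of edges in the double. Edges between different copies occur only through the shared link vertices, and vertices outside the closed star have no neighbours in other components. Both facts follow directly from the construction of $D_v\Gamma$, so I do not expect any real obstacle.
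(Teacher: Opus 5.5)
Your proposal is correct and follows essentially the same route as the paper: the paper also chooses distinct components $C,C'$ and a vertex $r$ in one copy of $C'$, observes that $\operatorname{st}_{D_v\Gamma}(r)$ lies in that copy of $C'$ together with the common link, and concludes that $\tau_C$ witnesses flexibility of $D_v\Gamma$ before applying Theorem~\ref{thm:doubling-obstruction}. Your extra verification that $\tau_C$ is a nontrivial graph automorphism just spells out the remark the paper makes before the statement.
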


\begin{proof}
Choose distinct components $C$ and $C'$ and a vertex $r$ in one copy of
$C'$.  Every vertex in the closed star of $r$ in $D_v\Gamma$ belongs either to
that same copy of $C'$ or to the common link of $v$. The nontrivial
automorphism $\tau_C$ fixes all of these vertices.  Thus $D_v\Gamma$ is flexible, and
Theorem~\ref{thm:doubling-obstruction} applies.
\end{proof}

The component swap also has a group-theoretic interpretation. The standard partial
conjugation supported on $C$ is the automorphism described in
\cite[p.~1747]{GenevoisMartin}, namely
\[
 \chi_{v,C}(r)=
 \begin{cases}
  vrv,&r\in C,\\
  r,&r\notin C
 \end{cases}
 \qquad(r\in V\Gamma).
\]
Its restriction to
$H_v$ induces $\tau_C$ on the doubled Coxeter generating set.  The product
of the partial conjugations over all components is conjugation by $v$. When
there are at least two components, each individual factor has proper
support. Corollary~\ref{cor:separating-star} therefore shows that a proper
partial conjugation obstructs graphical discreteness.

A similar component swap can be used in some cases even when the complement
of the star is connected.

\begin{corollary}\label{cor:partial-double}
Suppose that $C$ is a connected component of
$\Gamma\setminus\operatorname{st}_\Gamma(v)$ and that some
$s\in\operatorname{lk}_\Gamma(v)$ has no neighbour in $C$.  Then
$W_\Gamma$ is not graphically discrete.
\end{corollary}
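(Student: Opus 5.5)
We will show that $D_v\Gamma$ is flexible and then invoke Theorem~\ref{thm:doubling-obstruction}. The witness is the component swap $\tau_C$ defined before Corollary~\ref{cor:separating-star}: it exchanges the two copies of $C$ and fixes every other vertex of $D_v\Gamma$. The vertex whose closed star it should fix is the link vertex $s$, which has a single copy in $D_v\Gamma$.

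First, $\tau_C$ is a nontrivial graph automorphism of $D_v\Gamma$. The two copies of $C$ are nonempty and disjoint, so $\tau_C$ moves some vertex. For adjacency, recall that edges of $D_v\Gamma$ lie inside one copy of $\Gamma\setminus\{v\}$, with the link shared. An edge inside a copy of $C$ is carried to the corresponding edge in the other copy. An edge from a copy of $C$ to a link vertex $\ell$ is carried to the corresponding edge from the other copy of $C$ to $\ell$, and $\ell$ is fixed. No vertex of $C$ is adjacent to a vertex of another component of $\Gamma\setminus\operatorname{st}_\Gamma(v)$, so no other edges leave $C$. All remaining edges are fixed pointwise. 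Hence $\tau_C$ preserves adjacency; it is the same automorphism used in Corollary~\ref{cor:separating-star}.

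Next, $\tau_C$ fixes the closed star of $s$ in $D_v\Gamma$ pointwise. The neighbours of $s$ in $D_v\Gamma$ are the link vertices adjacent to $s$ in $\Gamma$, together with both copies of every neighbour of $s$ outside $\operatorname{st}_\Gamma(v)$. By hypothesis $s$ has no neighbour in $C$, so none of these neighbours lies in either copy of $C$. Thus every vertex of the closed star of $s$ is fixed by $\tau_C$, and $s$ itself is fixed because it is a link vertex.

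Therefore $D_v\Gamma$ is flexible, and Theorem~\ref{thm:doubling-obstruction} shows that $W_\Gamma$ is not graphically discrete. The step that needs care is the description of the closed star of $s$ in the double. One must check that the link-gap hypothesis excludes both copies of $C$ from that star, and not just one, which holds because both copies inherit the same adjacency to the link.
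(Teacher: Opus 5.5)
Your proposal is correct and follows the paper's own proof essentially step for step. In both, the swap of the two copies of $C$ in $D_v\Gamma$ is a nontrivial automorphism fixing the closed star of the link vertex $s$ pointwise, so $D_v\Gamma$ is flexible and Theorem~\ref{thm:doubling-obstruction} applies.
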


\begin{proof}
In $D_v\Gamma$, interchange the two copies of $C$ and fix every other
vertex.  There are no edges from $C$ to another component of
$\Gamma\setminus\operatorname{st}_\Gamma(v)$, and its two copies have
identical adjacency to the common link of $v$.  The interchange is therefore
a nontrivial graph automorphism. It fixes the closed star of $s$ pointwise
because $s$ has no neighbour in $C$.  Thus $D_v\Gamma$ is flexible, and
Theorem~\ref{thm:doubling-obstruction} applies.
\end{proof}

\begin{corollary}
\label{cor:dominated-star}
If distinct vertices $s,v\in V\Gamma$ satisfy
\[
 \operatorname{st}_\Gamma(s)\subseteq\operatorname{st}_\Gamma(v)
 \quad\text{and}\quad
 \operatorname{st}_\Gamma(v)\neq V\Gamma,
\]
then $W_\Gamma$ is not graphically discrete.
\end{corollary}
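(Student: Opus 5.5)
The plan is to reduce the statement directly to the link-gap criterion, Corollary~\ref{cor:partial-double}, applied at the vertex $v$. That corollary needs two inputs. The first is a component $C$ of $\Gamma\setminus\operatorname{st}_\Gamma(v)$. The second is a vertex $s\in\operatorname{lk}_\Gamma(v)$ with no neighbour in $C$. I expect both to follow immediately from the two displayed hypotheses.

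\emph{Step 1: $s$ lies in the link of $v$.} Since $s\in\operatorname{st}_\Gamma(s)\subseteq\operatorname{st}_\Gamma(v)=\{v\}\cup\operatorname{lk}_\Gamma(v)$ and $s\neq v$, we get $s\in\operatorname{lk}_\Gamma(v)$.

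\emph{Step 2: a component exists.} Since $\operatorname{st}_\Gamma(v)\neq V\Gamma$, the induced subgraph $\Gamma\setminus\operatorname{st}_\Gamma(v)$ is nonempty. Hence it has at least one connected component; choose one and call it $C$.

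\emph{Step 3: $s$ misses $C$.} Every neighbour of $s$ lies in $\operatorname{lk}_\Gamma(s)\subseteq\operatorname{st}_\Gamma(s)\subseteq\operatorname{st}_\Gamma(v)$. Since $C$ is disjoint from $\operatorname{st}_\Gamma(v)$, $s$ has no neighbour in $C$. In fact $s$ has no neighbour outside $\operatorname{st}_\Gamma(v)$ at all.

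Corollary~\ref{cor:partial-double} then gives that $W_\Gamma$ is not graphically discrete. Unwinding that corollary, the witness is the swap of the two copies of $C$ in $D_v\Gamma$. This swap fixes the closed star of $s$, which lies in the common link, pointwise. So $D_v\Gamma$ is flexible, and Theorem~\ref{thm:doubling-obstruction} applies.

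There is no real obstacle here. The only point needing care is Step~1: the containment of closed stars forces $s$ to be adjacent to $v$, so $s$ is a legitimate vertex of the common link in $D_v\Gamma$. The hypothesis $\operatorname{st}_\Gamma(v)\neq V\Gamma$ is used only to ensure the doubled part is nonempty, which makes the swap nontrivial.
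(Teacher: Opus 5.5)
Your proposal is correct and is essentially the paper's proof: the paper also derives $s\in\operatorname{lk}_\Gamma(v)$ and that $s$ has no neighbour outside $\operatorname{st}_\Gamma(v)$ from the star inclusion, uses $\operatorname{st}_\Gamma(v)\neq V\Gamma$ to get a nonempty complement, and applies Corollary~\ref{cor:partial-double} to any component of $\Gamma\setminus\operatorname{st}_\Gamma(v)$.
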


\begin{proof}
The star inclusion implies that $s\in\operatorname{lk}_\Gamma(v)$ and
that $s$ has no neighbour outside $\operatorname{st}_\Gamma(v)$. Since
$\operatorname{st}_\Gamma(v)\neq V\Gamma$, the complement has a vertex.
Apply Corollary~\ref{cor:partial-double} to any component of
$\Gamma\setminus\operatorname{st}_\Gamma(v)$.
\end{proof}

Corollary~\ref{cor:separating-star} detects examples missed by the link-gap
criterion in Corollary~\ref{cor:partial-double}, even when the defining graph
is asymmetric with no dominated vertices and the associated right-angled
Coxeter group is one-ended and hyperbolic.  For $m\geq2$, let
$\Theta_m$ be the union of the paths
\[
 P_0=s-v-t,\qquad
 P=s-a_1-a_2-a_3-a_4-a_5-t,\qquad
 Q_m=s-b_1-\cdots-b_m-t,
\]
and the additional path
\[
 R=a_1-c_1-c_2-c_3-a_4,
\]
where all displayed internal vertices are distinct. The graph $\Theta_m$ is
connected by construction.

\begin{corollary}\label{cor:asymmetric-family}
For $m\geq2$, the graph $\Theta_m$ is asymmetric and has no distinct
vertices $x,y$ satisfying
$\operatorname{st}_{\Theta_m}(x)\subseteq
\operatorname{st}_{\Theta_m}(y)$.  The groups $W_{\Theta_m}$ are pairwise
nonisomorphic, one-ended, hyperbolic and not graphically discrete.
Moreover, there is no triple $(x,C,y)$ in which $C$ is a component of
$\Theta_m\setminus\operatorname{st}_{\Theta_m}(x)$ and
$y\in\operatorname{lk}_{\Theta_m}(x)$ has no neighbour in $C$.
\end{corollary}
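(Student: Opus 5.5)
The plan is to verify each assertion of Corollary~\ref{cor:asymmetric-family} directly from the shape of $\Theta_m$. First I record its structure. The vertices are $s,t,v,a_1,\dots,a_5,b_1,\dots,b_m,c_1,c_2,c_3$, so $|V\Theta_m|=m+11$. The vertices of degree three are exactly $s,t,a_1,a_4$, and every other vertex has degree two. Suppressing the degree-two vertices gives a multigraph on these four branch vertices, with each edge labelled by the length of the path it replaces:
\[
 s\!-\!t:\ 2,\ m+1;\qquad s\!-\!a_1:\ 1;\qquad a_1\!-\!a_4:\ 3,\ 4;\qquad a_4\!-\!t:\ 2.
\]
The simple cycles of $\Theta_m$ have lengths at least $\min(m+3,7)\ge 5$. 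The shortest is $s\!-\!v\!-\!t\!-\!b_m\!-\!\cdots\!-\!b_1\!-\!s$, which is why $m\ge2$ is needed. Hence $\Theta_m$ is triangle-free and has no induced four-cycle, so $W_{\Theta_m}$ is hyperbolic by \cite[Corollary~12.6.3]{Davis}.

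\emph{Asymmetry.} An automorphism permutes $\{s,t,a_1,a_4\}$ and induces a length-preserving automorphism of the labelled multigraph above. The only length-$1$ edge is $s\!-\!a_1$, so $\{s,a_1\}$ is preserved. Swapping $s$ and $a_1$ would carry the multiset $\{2,m+1\}$ of $s$--$t$ labels onto the multiset $\{3,4\}$ of $a_1$--$a_4$ labels, which is impossible. Hence $s$ and $a_1$ are fixed, and therefore so are $t$ and $a_4$. Each parallel pair of paths has distinct lengths ($2\ne m+1$ and $3\ne4$), so every subdivided path is fixed pointwise, and the automorphism is the identity.

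\emph{No dominated stars.} If $x\ne y$ and $\operatorname{st}(x)\subseteq\operatorname{st}(y)$, then $x\sim y$. Triangle-freeness then forces $\operatorname{lk}(x)=\{y\}$, so $x$ would have degree one; but the minimum degree is two.

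\emph{Non-isomorphism.} The abelianization of $W_{\Theta_m}$ is $C_2^{\,m+11}$, so the groups $W_{\Theta_m}$ are pairwise nonisomorphic.

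\emph{Not graphically discrete.} We have $\operatorname{st}(v)=\{s,v,t\}$. Its complement splits into the component $\{a_1,\dots,a_5,c_1,c_2,c_3\}$ and the path $b_1\!-\!\cdots\!-\!b_m$. Corollary~\ref{cor:separating-star} therefore applies.

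\emph{One-endedness.} I would use Davis's criterion for right-angled Coxeter groups: the group is one-ended when the defining graph is connected, not complete, and not separated by a clique. Since $\Theta_m$ is triangle-free, the cliques are vertices and edges. It suffices to check that deleting any vertex, or the two endpoints of any edge, leaves a connected graph. I would argue this with the branch multigraph. Deleting an interior vertex of a subdivided path, or the endpoints of an edge inside such a path, kills only that path. The four branch vertices stay joined through the remaining paths, using the cycles $s\!-\!t$, $a_1\!-\!a_4$ and the connectors $s\!-\!a_1$, $a_4\!-\!t$. The few edges incident to branch vertices, such as $\{s,a_1\}$, $\{s,v\}$ and $\{a_4,c_3\}$, I would check by hand.

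\emph{Link-gap triples.} This is the main obstacle, and I would argue as follows. Fix $x$ and $y\in\operatorname{lk}(x)$. By triangle-freeness all neighbours of $y$ other than $x$ lie outside $\operatorname{st}(x)$. Since $\deg y\ge2$, $y$ has a neighbour in the complement. Hence a bad triple can exist only at a vertex $x$ whose star has disconnected complement.

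So the key step is to show that $v$ is the only such vertex. This is a finite case check. The star $\operatorname{st}(x)$ has at most four vertices and deletes at most one or two consecutive pieces of the labelled multigraph. For instance, removing $\operatorname{st}(a_1)=\{s,a_1,a_2,c_1\}$ leaves $v,t,a_3,a_4,a_5,c_2,c_3,b_1,\dots,b_m$, all still connected through $t$ and $a_4$. I would organize the check by the position of $x$: branch vertex, neighbour of a branch vertex, or deeper interior vertex. Paths of length at least $3$ survive deletion of a star in their interior only when the rest of the cycle reconnects them. The one-step $s$--$v$--$t$ path is the unique place where a star swallows a whole branch connection.

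Finally, for $x=v$ the links are $s$ and $t$. Here $s$ is adjacent to $a_1$ and $b_1$, and $t$ is adjacent to $a_5$ and $b_m$, so each meets both components. Hence no triple $(x,C,y)$ as in the statement exists.
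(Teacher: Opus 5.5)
Your proposal is correct and follows the paper's proof essentially step for step: the suppressed four-vertex multigraph for asymmetry, the girth bound for hyperbolicity, triangle-freeness with minimum degree two for the star-domination claim and for reducing the link-gap claim to connectivity of $\Theta_m\setminus\operatorname{st}(x)$ for $x\neq v$, a separating-clique check for one-endedness, Corollary~\ref{cor:separating-star} at $v$, and the abelianization $(C_2)^{m+11}$. Two slips need fixing when you write it out. First, the $(m+3)$-cycle through $v$ and the $b_i$ is a shortest cycle only for $m\leq4$; for $m\geq5$ the shortest is the $7$-cycle $a_1a_2a_3a_4c_3c_2c_1$, although your bound $\min\{7,m+3\}\geq5$ is right. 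Second, your heuristic that $s$--$v$--$t$ is the only branch path swallowed by a star is false, since $\operatorname{st}(a_5)=\{a_4,a_5,t\}$ swallows $a_4$--$a_5$--$t$. So $a_5$ must be checked directly, as the paper does. Its complement stays connected because $s$ and $a_1$ remain adjacent and every surviving segment hangs off them. The complement of $\operatorname{st}(v)$, by contrast, is disconnected because the $b$-path attaches only at $s$ and $t$.
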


\begin{proof}
The vertices of degree three are precisely $s,t,a_1,a_4$. Every other
vertex has degree two. Suppressing the degree-two vertices gives a
four-vertex multigraph. The two parallel $s$--$t$ paths have lengths $2$ and
$m+1$, whereas the two parallel $a_1$--$a_4$ paths have lengths $3$ and $4$.
These two unordered pairs of lengths are distinct, so every automorphism
preserves the unordered pairs $\{s,t\}$ and $\{a_1,a_4\}$.
The two remaining paths joining these pairs have lengths one and two. The unique
length-one path singles out $s$ and $a_1$. Hence every automorphism fixes all
four branch vertices. The parallel paths have distinct lengths, so each is preserved and
all of its internal vertices are fixed. Thus $\Theta_m$ is asymmetric.

The cycles obtained from the two parallel $a_1$--$a_4$ paths and the two
parallel $s$--$t$ paths have lengths $7$ and $m+3$, respectively.  Every
remaining cycle uses the two single paths and one path from each parallel
pair, and hence has length $8$, $9$, $m+7$ or $m+8$.  Thus $\Theta_m$ has
girth $\min\{7,m+3\}\geq5$.  In particular it is triangle-free and has no induced
four-cycle, so $W_{\Theta_m}$ is hyperbolic \cite[Corollary~12.6.3]{Davis}.

Every vertex has degree at least two.  If
$\operatorname{st}(x)\subseteq\operatorname{st}(y)$ for distinct vertices,
then $x$ and $y$ are adjacent.  A second neighbour $z$ of $x$ would also be
adjacent to $y$, producing a triangle.  This proves that there are no
dominated vertices.

There is no separating clique. Since the graph is triangle-free, every
clique is a vertex or an edge. First consider deleting a vertex. If the deleted vertex
is internal to one of the six maximal branch-to-branch paths, that path is
replaced by at most two tails attached to its branch endpoints. The
suppressed multigraph remains connected after removing any one of its six
paths. If instead a branch vertex is deleted, the other branch vertices that
survive remain connected in the suppressed multigraph, and every remnant of
a path incident with the deleted vertex is a tail attached to its opposite
endpoint. Thus no vertex separates $\Theta_m$.

Next consider deleting the endpoints of an edge lying on a maximal path $J$. If neither
endpoint is a branch vertex, the surviving pieces of $J$ are tails attached
to the endpoints of $J$, while the suppressed multigraph with $J$ removed is
connected. If exactly one endpoint is a branch vertex, delete that branch
vertex in the suppressed multigraph. The remaining branch core is connected,
and all surviving path pieces attach to it. The only edge whose two endpoints
are branch vertices is $s-a_1$. After deleting $s$ and $a_1$, the vertices
$t$ and $a_4$ remain joined by the path through $a_5$, and every surviving
segment attaches to this connected core. Hence deleting a clique never disconnects $\Theta_m$. In a right-angled
Coxeter group, every clique generates a finite special subgroup. Therefore
\cite[Corollary~16]{MihalikTschantz} implies that $W_{\Theta_m}$ has at most
one end. Since $s$ and $t$ are nonadjacent, the special subgroup
$W_{\{s,t\}}\cong C_2*C_2$ is infinite. Thus $W_{\Theta_m}$ is infinite and
hence one-ended.

For every $x\neq v$, the graph
$\Theta_m\setminus\operatorname{st}_{\Theta_m}(x)$ is connected. Suppose
first that $x$ is internal to a maximal path. If neither neighbour of $x$ is
a branch vertex, removing the closed star of $x$ leaves at most two tails
attached to the connected branch core obtained by deleting that maximal
path. If exactly one neighbour is a branch vertex, remove that branch vertex
from the suppressed multigraph. The remaining branch core is connected, and
all surviving path pieces attach to it. The only internal vertex other than
$v$ adjacent to two branch vertices is $a_5$. Deleting
$\operatorname{st}(a_5)=\{a_4,a_5,t\}$ leaves $s$ joined to $a_1$, with
all surviving path segments attached to that pair. If $x$ is a branch
vertex, the surviving core can be described directly. For $x=s$, the vertices
$t$ and $a_4$ remain joined through $a_5$. For $x=t$, the vertices $s$ and
$a_1$ remain adjacent. For $x=a_1$, the vertices $t$ and $a_4$ remain joined
through $a_5$, while for $x=a_4$, the vertices $s,t,a_1$ remain connected.
Every remaining path segment attaches to the indicated core.
Since $\Theta_m$ is triangle-free and has minimum degree two, every $y\in\operatorname{lk}(x)$ has a second neighbour outside
$\operatorname{st}(x)$, and hence a neighbour in that unique component. For
$x=v$, the complement of the closed star has exactly two components,
\[
 A=\{a_1,\ldots,a_5,c_1,c_2,c_3\},
 \qquad B=\{b_1,\ldots,b_m\},
\]
and each of the two link vertices meets both components. The vertex $s$ is
adjacent to $a_1,b_1$, and $t$ is adjacent to $a_5,b_m$.  This proves the final assertion and shows that
Corollary~\ref{cor:partial-double} does not detect the family.

On the other hand, the displayed sets $A$ and $B$ are distinct components of
$\Theta_m\setminus\operatorname{st}(v)$, so
Corollary~\ref{cor:separating-star} proves that $W_{\Theta_m}$ is not
graphically discrete.  Finally,
$W_{\Theta_m}^{\mathrm{ab}}\cong(C_2)^{m+11}$, and hence the groups are
pairwise nonisomorphic.
\end{proof}

\subsection{Projective planes}

For projective planes, nonflexibility imposes additional structure. The first
result describes the automorphism groups of one-vertex doubles.

\begin{proposition}
\label{prop:projective-plane-double}
Let $\Gamma$ be the incidence graph of a finite projective plane of order
$q\geq2$, let $v\in V\Gamma$, and let $\delta_v$ be the deck involution of
$D_v\Gamma$. If $\Gamma$ is nonflexible, then
\[
 \Aut(D_v\Gamma)=\Aut(\Gamma)_v\times\langle\delta_v\rangle,
\]
where $\Aut(\Gamma)_v$ denotes the stabilizer of $v$ and acts diagonally
on the two copies. In particular,
$D_v\Gamma$ is nonflexible.
\end{proposition}

\begin{proof}
Fix $v$ and put $L=\operatorname{lk}_\Gamma(v)$.  In $D_v\Gamma$, every
vertex of $L$ has degree $2q$, while every other vertex has degree $q+1$.
Since $q\geq2$, the set $L$ is invariant under
$\Aut(D_v\Gamma)$.

The graph $\Gamma\setminus\operatorname{st}_\Gamma(v)$ is connected. By
point-line duality, assume that $v$ is a point. For distinct points
$x,y\neq v$, if $x,y,v$ are not collinear, the path $x-(xy)-y$ avoids the
star of $v$.  If $x,y,v$ are collinear, choose a point $z$ outside their common
line. Then
\[
 x-(xz)-z-(zy)-y
\]
is such a path.  Every line not through $v$ is incident with a remaining
point.  Thus deleting $L$ from $D_v\Gamma$ leaves exactly two connected
components, the two copies of
$\Gamma\setminus\operatorname{st}_\Gamma(v)$.

After composing with $\delta_v$ if necessary, every automorphism may be
assumed to preserve the two components. Since $D_v\Gamma$ is connected and
bipartite, an automorphism either preserves or interchanges its two bipartition
classes. The nonempty invariant set $L$ lies in one class, so the classes are
preserved. After restoring $v$, the induced automorphisms give elements
$g_1,g_2\in\Aut(\Gamma)_v$ with the same restriction to $L$.  Hence
$g_2^{-1}g_1$ fixes $\operatorname{st}_\Gamma(v)$ pointwise, and
nonflexibility gives $g_1=g_2$.  Consequently
\[
 \Aut(D_v\Gamma)=\Aut(\Gamma)_v\times\langle\delta_v\rangle,
\]
where the first factor acts diagonally.  The deck involution commutes with
this diagonal action and does not belong to it, so the product is direct.

Suppose an automorphism fixes the closed star of a vertex $r$ in
$D_v\Gamma$ pointwise. Its decomposition cannot involve $\delta_v$. If $r\notin L$, the deck
involution interchanges the components. If $r\in L$, it interchanges the two
sets of neighbours of $r$ outside $L$.  The automorphism is therefore
diagonal, induced by some $g\in\Aut(\Gamma)_v$.  If $r\notin L$, then $g$
fixes the corresponding closed star in $\Gamma$.  If $r\in L$, it fixes all
neighbours of $r$ other than $v$ and also fixes $v$, so again it fixes a
closed star in $\Gamma$.  Nonflexibility forces $g=1$.
\end{proof}

\begin{proposition}
\label{prop:plane-perspectivity}
Let $\Pi$ be a finite projective plane with incidence graph $\Gamma$. Then
$\Gamma$ is flexible if and only if $\Pi$ admits a nontrivial perspectivity.
\end{proposition}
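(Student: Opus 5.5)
The plan is to translate flexibility of the incidence graph directly into the language of collineations and then identify the relevant collineations as perspectivities. Recall that a perspectivity (central collineation) with centre $p$ and axis $\ell$ is a collineation fixing every point of $\ell$ and every line through $p$. It is standard that a collineation fixing all points of a line $\ell$ pointwise also fixes all lines through some point $p$, and dually. Thus a nontrivial perspectivity is the same as a nontrivial collineation fixing some line pointwise, or equivalently fixing some point linewise.

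First, relate graph automorphisms of $\Gamma$ to collineations and correlations of $\Pi$. An automorphism fixing a closed star $\operatorname{st}_\Gamma(x)$ pointwise fixes $x$. Since $\Gamma$ is connected and bipartite, such an automorphism preserves each bipartition class, so it is a collineation and not a correlation. If $x$ is a line $\ell$, the condition says exactly that $g$ is a collineation fixing $\ell$ and every point on $\ell$. If $x$ is a point $p$, it says $g$ fixes $p$ and every line through $p$. By the above, in either case $g$ is a perspectivity. So flexibility yields a nontrivial perspectivity.

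Conversely, given a nontrivial perspectivity with axis $\ell$, it fixes $\ell$ and every point of $\ell$. Hence it fixes $\operatorname{st}_\Gamma(\ell)$ pointwise and is a nontrivial graph automorphism, so $\Gamma$ is flexible.

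The main step needing care is the classical lemma that an axial collineation is central, which I will prove rather than cite. Let $g\neq1$ fix every point of $\ell$.
\begin{itemize}
\item If $g$ fixes a point $p\notin\ell$, then every line $m$ through $p$ meets $\ell$ in a fixed point. So $m$ is the line joining two fixed points and is itself fixed, and $p$ serves as a centre.
\item If $g$ fixes no point off $\ell$, take $x\notin\ell$. The line $m=x\,g(x)$ meets $\ell$ in a point $c$, so $m=c\,x$ and $g(m)=c\,g(x)=m$; hence $m$ is fixed.
\item Now take another point $y\notin\ell\cup m$, with fixed line $y\,g(y)$ meeting $\ell$ at $c'$. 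If $c'\neq c$, then $y\,g(y)$ and $m$ are distinct fixed lines meeting in a fixed point off $\ell$, contradicting the case assumption. So every such line passes through $c$.
\item Finally, every line through $c$ other than $\ell$ contains a point $y\notin\ell$ and equals $y\,g(y)$, hence is fixed. So $c$ is a centre.
\end{itemize}
The dual statement, that a collineation fixing every line through $p$ has an axis, follows by point-line duality applied to the dual plane. I expect this to be the only nonroutine step; the rest is bookkeeping with the definition of flexibility.
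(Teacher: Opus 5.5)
Your proof is correct and follows the paper's route. An automorphism fixing $\operatorname{st}_\Gamma(v)$ pointwise preserves the bipartition, so it is a collineation with centre $v$ or axis $v$, and the centre--axis theorem makes it a perspectivity; conversely, a perspectivity fixes the closed star of its centre (you use its axis instead, which works equally well). The only real difference is that you prove the axial-implies-central lemma by hand and dualize it, whereas the paper cites Hughes--Piper, Theorem~4.9; your case analysis for that lemma is sound.
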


\begin{proof}
A nontrivial perspectivity with centre $p$ fixes $p$ and every line through
$p$, hence fixes the closed star of the point vertex $p$ in $\Gamma$. Thus
$\Gamma$ is flexible.

Conversely, let $1\neq g\in\Aut(\Gamma)$ fix
$\operatorname{st}_\Gamma(v)$ pointwise. Since $\Gamma$ is connected and
bipartite and $g$ fixes $v$, the automorphism $g$ preserves the point and line
classes and hence induces a collineation of $\Pi$. If $v$ is a point, then
$g$ fixes every line through $v$, so $v$ is a centre of $g$. If $v$ is a
line, then $v$ is an axis of $g$. The standard centre-axis theorem for projective-plane collineations states
that a collineation has a centre if and only if it has an axis \cite[Theorem~4.9, p.~94]{HughesPiper}. Hence $g$ is a nontrivial central collineation, or equivalently an axial
collineation, and therefore a perspectivity.
\end{proof}

\begin{corollary}
If a finite projective plane of order $q\geq2$ is perspectivity-free and has
incidence graph $\Gamma$, then $W_\Gamma$ is graphically discrete and every
one-vertex double $D_v\Gamma$ is nonflexible.
\end{corollary}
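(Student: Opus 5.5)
The corollary follows by combining three earlier results; no new construction is needed.

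\emph{Step 1: nonflexibility of $\Gamma$.} Assume the plane $\Pi$ of order $q\geq2$ is perspectivity-free. By Proposition~\ref{prop:plane-perspectivity}, its incidence graph $\Gamma$ is nonflexible.

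\emph{Step 2: graphical discreteness of $W_\Gamma$.} I will apply Theorem~\ref{thm:polygon-classification} with $m=3$ and $\Lambda=W_\Gamma$. This group is trivially quasi-isometric to itself. The hypotheses need checking: the incidence graph of a finite projective plane is a generalized $3$-gon, namely connected and bipartite of diameter $3$ and girth $6$. Every vertex has degree $q+1\geq3$ because $q\geq2$, so $\Gamma$ is thick. With Step~1, the theorem gives that $W_\Gamma$ is graphically discrete.

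\emph{Step 3: nonflexibility of every double.} For each $v\in V\Gamma$, Proposition~\ref{prop:projective-plane-double} applies directly. Its hypotheses are order $q\geq2$ and $\Gamma$ nonflexible, both already established. Its final assertion is that $D_v\Gamma$ is nonflexible.

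The only point needing care is the thickness check in Step~2, which is what forces $q\geq2$. Everything else is a direct citation.
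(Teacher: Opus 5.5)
Your proposal is correct and matches the paper's proof: Proposition~\ref{prop:plane-perspectivity} gives nonflexibility of $\Gamma$, and then Theorem~\ref{thm:polygon-classification} (with $\Lambda=W_\Gamma$) and Proposition~\ref{prop:projective-plane-double} give the two conclusions. You also check explicitly that $\Gamma$ is a thick generalized $3$-gon because every vertex has degree $q+1\geq3$, which the paper leaves implicit. The hypothesis $q\geq2$ is also used inside Proposition~\ref{prop:projective-plane-double}, so thickness is not its only role.
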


\begin{proof}
By Proposition~\ref{prop:plane-perspectivity}, $\Gamma$ is nonflexible.
Apply Theorem~\ref{thm:polygon-classification} and
Proposition~\ref{prop:projective-plane-double}.
\end{proof}

Let $\Pi$ be a finite projective plane and let $\Col(\Pi)$ denote its
collineation group.

\begin{proposition}
\label{prop:plane-involutions}
Let $\Pi$ have order $q$ and incidence graph $\Gamma$. If $\Gamma$ is
nonflexible (equivalently, $\Pi$ is perspectivity-free) and $q$ is not a
perfect square, then $\Col(\Pi)$ has odd order and is solvable.
\end{proposition}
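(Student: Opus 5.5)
The proof has two steps. First we show that $\Col(\Pi)$ contains no involution, so $|\Col(\Pi)|$ is odd. Then we apply the Feit--Thompson theorem \cite[Chapter~I, \S1, p.~775]{FeitThompson}, which says every finite group of odd order is solvable.

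By Proposition~\ref{prop:plane-perspectivity}, nonflexibility of $\Gamma$ is the same as $\Pi$ having no nontrivial perspectivity. So suppose $\sigma\in\Col(\Pi)$ is an involution. Baer's involution theorem, as stated in \cite[p.~878]{Cofman}, gives a dichotomy for an involutory collineation of a finite projective plane of order $q$. Either $\sigma$ is a perspectivity (an elation or a homology), or $q$ is a perfect square and the fixed points and lines of $\sigma$ form a Baer subplane of order $\sqrt q$.

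Because $q$ is not a perfect square, the Baer-subplane case cannot occur, so $\sigma$ is a nontrivial perspectivity. This contradicts perspectivity-freeness. Hence $\Col(\Pi)$ has no element of order two, and by Cauchy's theorem $|\Col(\Pi)|$ is odd. Feit--Thompson then gives solvability.

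The delicate point is to quote Baer's theorem in exactly this form, with the Baer-subplane alternative forcing $q$ to be a square. The argument relies on that. If the cited statement is phrased instead as "the fixed elements form a Baer subplane or $\sigma$ is a perspectivity", I will add one line. A Baer subplane of order $q_0$ in a plane of order $q$ satisfies $q=q_0^2$, by the standard counting that every line of $\Pi$ meets the subplane. Finally, I will note that $\Col(\Pi)$ is identified with the bipartition-preserving subgroup of $\Aut(\Gamma)$, so the conclusion is about collineations, as stated, and not about dualities.
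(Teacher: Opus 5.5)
Your proposal is correct and matches the paper's proof step for step. It uses Proposition~\ref{prop:plane-perspectivity}, then Baer's involution dichotomy with the subplane case excluded by nonsquare order, then Cauchy's theorem to get odd order, and Feit--Thompson for solvability. Your extra counting remark on Baer subplanes, forcing $q=q_0^2$, is a harmless safeguard that the cited form of Baer's theorem already makes unnecessary.
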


\begin{proof}
By Proposition~\ref{prop:plane-perspectivity}, $\Pi$ has no nontrivial
perspectivity. By Baer's involution theorem, an involution of a finite projective plane is either a perspectivity
or fixes a subplane of order $\sqrt q$ pointwise \cite[p.~878]{Cofman}. The latter case can occur
only when $q$ is a perfect square. Thus, under the
stated hypotheses, $\Col(\Pi)$ contains no involution. Cauchy's theorem then
implies that $|\Col(\Pi)|$ is odd. The Feit--Thompson theorem shows that
$\Col(\Pi)$ is solvable
\cite[Chapter~I, \S1, p.~775]{FeitThompson}.
\end{proof}

Equivalently, a projective plane of nonsquare order with even-order
collineation group has a flexible incidence graph, so the corresponding
right-angled Coxeter group is not graphically discrete. Hence a projective
plane with a nonflexible incidence graph has either square order or nonsquare
order with an odd solvable collineation group. Proposition~\ref{prop:plane-involutions}
therefore narrows the finite-geometric search but does not resolve the
existence question.

\subsection{Unbounded orbits from thick panels}

Let $X$ be a locally finite semi-regular right-angled building of type
$(W,S)$, where $S$ is finite. Here semi-regular means that every $r$-panel has
the same finite cardinality $q_r\geq2$, depending only on $r\in S$
\cite[\S1]{Caprace}. Write
$\Aut(X)^{+}$ for the type-preserving automorphism group. A building
automorphism is determined by its action on chambers and may therefore be
viewed as the induced chamber-graph automorphism. Because $S$ is finite, preservation of
each chamber-adjacency type is a closed condition in the pointwise-convergence
topology. Hence $\Aut(X)^{+}$ is a closed subgroup of the automorphism group of
the chamber graph. It is strongly transitive, and hence chamber-transitive, by
\cite[Proposition~6.1]{Caprace}.

For a panel $\sigma$, write $\Ch(\sigma)$ for its set of chambers. For a chamber
$x$, write $\proj_\sigma(x)$ for the gate projection of $x$ to $\sigma$.  Caprace's extension theorem
\cite[Proposition~4.2]{Caprace} states that every permutation $\alpha$ of
the chambers of a panel $\sigma$ extends to an element of $\Aut(X)^{+}$
that stabilizes $\sigma$, induces $\alpha$ on it, and fixes every chamber
whose projection to $\sigma$ is fixed by $\alpha$. Both propositions are stated
for semi-regular right-angled buildings and do not require $q_r>2$ for every
$r\in S$.

Under the rank-two hypothesis below, Kubena and Thomas proved that
both the type-preserving and full automorphism groups are
nondiscrete \cite[Theorem~1(1)]{KT}. The next proposition strengthens the
type-preserving conclusion to failure of compact-by-discreteness by exhibiting
unbounded chamber-stabilizer orbits.

\begin{proposition}\label{prop:thick-orbits}
Suppose that $q_s>2$ and $m_{st}=\infty$ for some $s,t\in S$.  Then
$\Aut(X)^{+}$ is not compact-by-discrete.
\end{proposition}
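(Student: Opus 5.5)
The plan is to apply Theorem~\ref{thm:MSSW-orbits} to the closed, chamber-transitive (hence cocompact) subgroup $H=\Aut(X)^{+}$ of the automorphism group of the chamber graph. It suffices to fix a base chamber $c$ and, for each $k\geq1$, find a chamber $y_k$ whose orbit under the stabilizer $H_c$ has at least $k$ elements; cocompactness and closedness were already recorded above. The construction mimics the proof of Theorem~\ref{thm:flexible}, with Caprace's extension theorem \cite[Proposition~4.2]{Caprace} replacing White's first-$s$ automorphism.

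First I fix a gallery. Since $m_{st}=\infty$, the word $(st)^k$ is reduced, so there is a minimal gallery
\[
 c=c_0,\ c_1,\ \ldots,\ c_{2k}=y_k
\]
of type $stst\cdots st$. Its $s$-panels are $\sigma_j$, the $s$-panel containing $c_{2j}$ and $c_{2j+1}$, for $0\leq j<k$.

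For each $j$, I use $q_s>2$ to pick a chamber $d_j\in\Ch(\sigma_j)$ distinct from $c_{2j}$ and $c_{2j+1}$. Let $\alpha_j$ be the transposition of $c_{2j+1}$ and $d_j$, which fixes $c_{2j}$. Caprace's theorem gives $g_j\in H$ inducing $\alpha_j$ on $\sigma_j$ and fixing every chamber whose projection to $\sigma_j$ is fixed by $\alpha_j$. Now $\proj_{\sigma_j}(c)=c_{2j}$, because the gallery is minimal and $c_{2j}$ is the chamber of $\sigma_j$ nearest to $c$. So $g_j$ fixes $c$, and it also fixes $c_0,\ldots,c_{2j}$. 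On the other hand $\proj_{\sigma_j}(y_k)=c_{2j+1}$, so $g_j(y_k)$ is a chamber whose projection to $\sigma_j$ is $d_j$.

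Next I show the chambers $y_k,g_0(y_k),\ldots,g_{k-1}(y_k)$ are pairwise distinct. Each $g_j(y_k)$ lies at distance $2k$ from $c$ along the minimal gallery
\[
 c_0,\ldots,c_{2j},\ d_j,\ g_j(c_{2j+2}),\ \ldots
\]
This gallery agrees with the original gallery up to $c_{2j}$ and first deviates at position $2j+1$. In a building, the minimal gallery of a fixed reduced type $(st)^k$ from $c$ to a chamber $y$ is unique, because the $W$-distance is $\delta(c,y)=(st)^k$ and galleries of reduced type are determined by their endpoints. So the chamber at position $i$ is determined by $y$.

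It follows that the projection to $\sigma_j$ of $g_j(y_k)$ is $d_j\neq c_{2j+1}$, while the projection of $y_k$ to $\sigma_j$ is $c_{2j+1}$. So $g_j(y_k)\neq y_k$.

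For $i<j$, $g_j$ fixes $c_{2i+1}$. Hence the gallery to $g_j(y_k)$ passes through $c_{2i+1}$, whereas the gallery to $g_i(y_k)$ passes through $d_i$ at that position. So $g_i(y_k)\neq g_j(y_k)$. This gives $|H_c\cdot y_k|\geq k+1$.

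The main obstacle is justifying the two projection facts and the uniqueness of minimal galleries of fixed reduced type. These are standard building facts: gate property $d(x,y)=d(x,\proj_\sigma x)+d(\proj_\sigma x,y)$, and uniqueness of galleries of a given reduced type. Here I need only check that the type-$s$ step sits after a minimal prefix and before a minimal suffix. That follows because $(st)^k$ is reduced, so $c_{2j}$ is the unique gate of $\sigma_j$ for $c$ and $c_{2j+1}$ is the gate for $y_k$.

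With unbounded stabilizer orbits, the contrapositive of Theorem~\ref{thm:MSSW-orbits} shows that $\Aut(X)^{+}$ is not compact-by-discrete.
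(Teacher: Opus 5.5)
Your proposal is correct and is essentially the paper's proof. It uses the same ingredients: a minimal gallery of type $(st)^k$ from $c$ crossing $k$ distinct $s$-panels, Caprace extensions of transpositions that fix the gate of $c$ (and hence fix $c$), and the stabilizer-orbit criterion of Theorem~\ref{thm:MSSW-orbits}. The only difference is in how you separate $g_i(y_k)$ from $g_j(y_k)$: you compare the $(2i+1)$-st chambers of the image galleries and invoke uniqueness of galleries of a fixed reduced type, whereas the paper shows that $g_j$ fixes $\sigma_i$ pointwise (via the rank-two tree) and compares $\proj_{\sigma_i}$ using equivariance.
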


\begin{proof}
Fix a chamber $c$ and $k\geq1$. The incidence graph of the $s$- and
$t$-panels in the rank-two residue through $c$ is a tree \cite[\S1]{Caprace}. Choose a chamber $y$
so that a minimal gallery from $c$ to $y$ crosses, in order, distinct $s$-panels
\[
 \sigma_1,\ldots,\sigma_k.
\]
Put
\[
 a_i=\proj_{\sigma_i}(c),\qquad
 b_i=\proj_{\sigma_i}(y).
\]
The two projections $a_i$ and $b_i$ are distinct.  Since $q_s>2$, choose
$e_i\in\Ch(\sigma_i)\setminus\{a_i,b_i\}$.

Let $\alpha_i$ be the transposition of $b_i$ and $e_i$ fixing every other
chamber of $\sigma_i$.  Extend it by Caprace's theorem to
$g_i\in\Aut(X)^{+}$.  Since $\alpha_i(a_i)=a_i$, the automorphism $g_i$ fixes
$c$.

If $i<j$, the unique path in the rank-two incidence tree shows that every
chamber of $\sigma_i$ projects to $a_j$ on $\sigma_j$.  Hence $g_j$ fixes
$\sigma_i$ pointwise.  Equivariance of gate projections gives
\[
 \proj_{\sigma_i}(g_i y)=e_i,
 \qquad
 \proj_{\sigma_i}(g_j y)=b_i.
\]
Thus $g_i y\neq g_j y$ whenever $i<j$. Moreover,
$\proj_{\sigma_i}(g_i y)=e_i\neq b_i=\proj_{\sigma_i}(y)$, so
$g_i y\neq y$ for every $i$. Consequently
\[
 |\Aut(X)^{+}_c\cdot y|\geq k+1.
\]

The group $\Aut(X)^{+}$ is closed and chamber-transitive, hence acts
cocompactly on the connected locally finite chamber graph.  Since $k$ is
arbitrary, Theorem~\ref{thm:MSSW-orbits} proves the proposition.
\end{proof}

\subsection{The standard chamber graph of a graph product}

Let $\Gamma$ be a finite simplicial graph with vertex set $S$, and let
$(G_s)_{s\in S}$ be nontrivial finite groups. Their graph product
\cite[Chapter~3]{Green} is
\[
 G_\Gamma=\left(*_{s\in S}G_s\right)
 \big/\langle\!\langle[G_s,G_t]:\{s,t\}\in E\Gamma\rangle\!\rangle.
\]
By Shepherd's construction
\cite[Definitions~2.6 and~3.1, Lemmas~2.5(1) and~2.10]{Shepherd}, the standard
chamber graph $\cC_\Gamma$ has vertex set $G_\Gamma$. Two distinct vertices
$g,h$ are adjacent exactly when $g^{-1}h\in G_s\setminus\{1\}$ for some
$s\in S$. There is an associated right-angled building $\mathcal B_\Gamma$.
Its chamber graph is
$\cC_\Gamma$, its $s$-panels have cardinality $q_s:=|G_s|$, and $G_\Gamma$ acts
simply transitively on its chambers.

The discrete direction rests on the following elementary product observation.

\begin{lemma}\label{lem:triangle-product}
Let $A$ and $B$ be connected graphs.  Suppose every edge of $A$ lies in a
triangle and $B$ is triangle-free.  With the pointwise-convergence
topologies, the map
\[
 (\alpha,\beta)\longmapsto\bigl((a,b)\longmapsto(\alpha(a),\beta(b))\bigr)
\]
is an isomorphism of topological groups from
$\Aut(A)\times\Aut(B)$ onto $\Aut(A\mathbin{\square}B)$.
In particular, the images of both factors are closed in the product
automorphism group. If
$A$ is finite and $\Aut(B)$ is discrete, then
$\Aut(A\mathbin{\square}B)$ is discrete.
\end{lemma}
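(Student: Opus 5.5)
The plan is to recover the product decomposition from triangles: an edge of $A\mathbin{\square}B$ is either an $A$-edge (second coordinate fixed) or a $B$-edge (first coordinate fixed). Every triangle in a Cartesian product lies in a single fibre. Indeed, a triangle using edges of both kinds would need a closed walk of length three whose projection to each factor is a closed walk made of fewer than three steps, and that is impossible. Since $B$ is triangle-free, every triangle of $A\mathbin{\square}B$ lies in an $A$-fibre. Because every edge of $A$ lies in a triangle, the $A$-edges are exactly the edges of $A\mathbin{\square}B$ that lie in a triangle. Any automorphism $\Phi$ therefore preserves the two edge classes. Connectedness of $A$ and $B$ then shows that $\Phi$ maps $A$-fibres (the components of the $A$-edge subgraph) to $A$-fibres and $B$-fibres to $B$-fibres.

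Next I show that $\Phi$ is a product map. Define $\beta$ on $V(B)$ by letting $\beta(b)$ be the index of the $A$-fibre $\Phi(A\times\{b\})$. Define $\alpha_b$ on $V(A)$ by $\Phi(a,b)=(\alpha_b(a),\beta(b))$; each $\alpha_b$ is an automorphism of $A$. To see that $\alpha_b$ is independent of $b$, take a $B$-edge from $(a,b)$ to $(a,b')$. Its image is a $B$-edge, so the first coordinates agree: $\alpha_b(a)=\alpha_{b'}(a)$. Connectedness of $B$ gives a single $\alpha$. Similarly $\beta$ maps $B$-edges to $B$-edges, and applying the same argument to $\Phi^{-1}$ shows that $\beta$ is bijective with inverse also edge-preserving, so $\beta\in\Aut(B)$. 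This proves surjectivity. Injectivity and the homomorphism property are immediate.

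For topology, the map is continuous: if $\alpha,\beta$ fix finite sets $F_A,F_B$, then the product fixes $F_A\times F_B$. The inverse is also continuous. Fix any $b_0$ and $a_0$. If $\Phi$ fixes the finite set $(F_A\times\{b_0\})\cup(\{a_0\}\times F_B)$ pointwise, then $\alpha$ fixes $F_A$ and $\beta$ fixes $F_B$. The factor images $\Aut(A)\times 1$ and $1\times\Aut(B)$ are closed in the product topology, so they are closed in $\Aut(A\mathbin{\square}B)$. For the final claim, with $A$ finite, $\Aut(A)$ is finite and hence discrete, and a product of discrete groups is discrete.

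The main obstacle is the first step: verifying carefully that the triangle-membership criterion exactly separates the two edge classes. This is where the hypotheses are used — a $B$-edge cannot lie in a triangle because there are no mixed triangles and $B$ is triangle-free, while every $A$-edge lies in a triangle by hypothesis. Connectedness of $B$ is used to transfer $\alpha_b$ across fibres, and connectedness of $A$ is used for $\beta$.
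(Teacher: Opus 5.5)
Your proof is correct and follows essentially the paper's route. You use triangle membership to single out the $A$-edges, show that layers are permuted, split each automorphism as $(\alpha,\beta)$, and read off continuity of the inverse from the finite set $(F_A\times\{b_0\})\cup(\{a_0\}\times F_B)$, just as the paper evaluates at $(a,b_0)$ and $(a_0,b)$. One small wording fix: a closed walk of two steps does exist, so in the mixed-triangle argument the contradiction comes from the factor in which the triangle makes exactly one step.
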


\begin{proof}
If $A=K_1$ or $B=K_1$, the assertion is immediate, so assume both factors have
an edge. Every triangle in a Cartesian product lies in a layer of one factor.
Hence an edge of $A\mathbin{\square}B$ lies in a triangle if and only if it is
an $A$-edge. Every automorphism therefore preserves the $A$- and $B$-edges
separately. Since $A$ and $B$ are connected, the corresponding layers are the
connected components of the two single-edge-type subgraphs, so both families
of layers are permuted.

For $f\in\Aut(A\mathbin{\square}B)$, write
\[
 f(A\times\{b\})=A\times\{\beta(b)\},\qquad
 f(\{a\}\times B)=\{\alpha(a)\}\times B.
\]
Intersecting the two image layers gives
\[
 f(a,b)=(\alpha(a),\beta(b)).
\]
Preservation of the two edge types shows that $\alpha\in\Aut(A)$ and
$\beta\in\Aut(B)$. Conversely, every pair of factor automorphisms acts
on the product, so the displayed map is an abstract group isomorphism.

It remains to check the topology. The forward map is continuous by coordinate
evaluation. Fix $a_0\in V(A)$ and $b_0\in V(B)$. For an automorphism $f$ of
the product, the value $\alpha(a)$ is the first coordinate of $f(a,b_0)$,
and $\beta(b)$ is the second coordinate of $f(a_0,b)$. Thus the inverse is
continuous for the pointwise-convergence topologies. The map is therefore a
topological group isomorphism, and its two factors are closed. If $A$ is
finite, then $\Aut(A)$ is finite, giving the last assertion.
\end{proof}

In view of the Kubena--Thomas discreteness theorem
\cite[Theorem~1]{KT}, it remains to determine when compact-by-discreteness can
occur for the standard chamber graph.

\begin{theorem}\label{thm:chamber-classification}
Set
\[
 T=\{s\in S:|G_s|>2\},\qquad
 \Gamma_0=\Gamma[S\setminus T].
\]
The following conditions are equivalent.
\begin{enumerate}
\item $\Aut(\cC_\Gamma)$ is compact-by-discrete.
\item $\Aut(\cC_\Gamma)$ is discrete.
\item every vertex of $T$ is universal in $\Gamma$ and $\Gamma_0$ is
nonflexible.
\end{enumerate}
\end{theorem}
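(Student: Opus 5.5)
We prove $(2)\Rightarrow(1)$ trivially, then $(3)\Rightarrow(2)$, and finally the contrapositive of $(1)\Rightarrow(3)$, splitting according to which half of condition~(3) fails.

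For $(3)\Rightarrow(2)$, assume every vertex of $T$ is universal and $\Gamma_0$ is nonflexible. Then $T$ spans a clique joined to everything, so
\[
 G_\Gamma\cong\Bigl(\prod_{s\in T}G_s\Bigr)\times G_{\Gamma_0},
\]
and the chamber graph decomposes as a Cartesian product $\cC_\Gamma\cong A\mathbin{\square}B$. Here $A$ is the Cayley graph of $\prod_{s\in T}G_s$ with respect to $\bigcup_{s\in T}(G_s\setminus\{1\})$, namely the Hamming-type product of complete graphs $K_{|G_s|}$. Every $G_{s}$ with $s\notin T$ has order $2$, so $B=\cC(W_{\Gamma_0},S\setminus T)$ is the standard right-angled Coxeter Cayley graph.

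I would check the hypotheses of Lemma~\ref{lem:triangle-product} in turn. The graph $A$ is finite and connected, and every edge lies in a triangle: an edge of type $s\in T$ lies in a $K_{q_s}$ with $q_s\geq3$. If $T=\emptyset$, then $A=K_1$ and the case is trivial. The graph $B$ is triangle-free, since it is bipartite via word-length parity. Theorem~\ref{thm:flexible} gives that $\Aut(B)$ is discrete, using nonflexibility of $\Gamma_0$; when $\Gamma_0$ is empty, $B$ is a point. Lemma~\ref{lem:triangle-product} then yields discreteness of $\Aut(\cC_\Gamma)$. The one point needing care is identifying the chamber graph of a direct product of graph products with the Cartesian product of chamber graphs. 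This follows because $g^{-1}h\in G_s\setminus\{1\}$ changes exactly one coordinate.

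For $\neg(3)\Rightarrow\neg(1)$ there are two cases. If some $s\in T$ is not universal, pick $t$ nonadjacent to $s$. Then $m_{st}=\infty$ in the underlying right-angled Coxeter type and $q_s>2$, so Proposition~\ref{prop:thick-orbits} shows that $\Aut(\mathcal B_\Gamma)^+$ is not compact-by-discrete. This group is a closed subgroup of $\Aut(\cC_\Gamma)$, as noted before that proposition, so Lemma~\ref{lem:closed-subgroup} shows $\Aut(\cC_\Gamma)$ is not compact-by-discrete.

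Otherwise all of $T$ is universal and $\Gamma_0$ is flexible. The same product decomposition $\cC_\Gamma\cong A\mathbin{\square}B$ holds, and Lemma~\ref{lem:triangle-product} identifies $\Aut(B)$ with a closed subgroup of $\Aut(\cC_\Gamma)$. By Theorem~\ref{thm:flexible}, $\Aut(B)$ is not compact-by-discrete, and Lemma~\ref{lem:closed-subgroup} again concludes. If $T=\emptyset$, we have $\cC_\Gamma=B$ directly.

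The main obstacle I expect is the first case. I need to confirm that $\mathcal B_\Gamma$ is a locally finite semi-regular right-angled building of type $(W_\Gamma,S)$ with $q_s=|G_s|$, which is supplied by the cited Shepherd construction. I also need the closedness of the type-preserving group, already argued in the paper. The remaining steps are bookkeeping with the product lemma. The final "in particular" follows since $G_\Gamma$ acts geometrically (simply transitively) on $\cC_\Gamma$.
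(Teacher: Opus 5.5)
Your proposal is correct and follows the paper's proof essentially step for step. A non-universal vertex of $T$ is handled by Proposition~\ref{prop:thick-orbits} together with Lemma~\ref{lem:closed-subgroup}; otherwise the splitting $\cC_\Gamma\cong A\mathbin{\square}B$, combined with Lemma~\ref{lem:triangle-product} and Theorem~\ref{thm:flexible}, yields discreteness when $\Gamma_0$ is nonflexible, and failure of compact-by-discreteness via the closed factor $\Aut(B)$ when $\Gamma_0$ is flexible, with the degenerate cases $T=\varnothing$ and $S\setminus T=\varnothing$ treated exactly as in the paper.
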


\begin{proof}
Suppose first that $s\in T$ is not universal, and choose $t\neq s$ not
adjacent to $s$.  Then $q_s>2$ and $m_{st}=\infty$ in $W_\Gamma$.
Proposition~\ref{prop:thick-orbits} shows that the closed subgroup
$\Aut(\mathcal B_\Gamma)^{+}$ of $\Aut(\cC_\Gamma)$ is not compact-by-discrete.
Lemma~\ref{lem:closed-subgroup} therefore implies that
$\Aut(\cC_\Gamma)$ is not compact-by-discrete.

Assume now that every vertex of $T$ is universal. The graph product splits as
a direct product, and the standard generating set is the disjoint union of the
generating sets of its two factors. Hence its standard chamber graph is the
Cartesian product
\[
 G_\Gamma=\left(\prod_{s\in T}G_s\right)\times W_{\Gamma_0},
 \qquad
 \cC_\Gamma=A\mathbin{\square}B,
\]
where
\[
 A=\mathop{\mathbin{\square}}_{s\in T}K_{q_s},
 \qquad
 B=\cC(W_{\Gamma_0},S\setminus T).
\]
If $T=\varnothing$, take $A=K_1$.  Otherwise, $A$ is finite and every edge
of $A$ lies in a triangle.  Coxeter word-length parity makes $B$ bipartite and hence triangle-free. Lemma~\ref{lem:triangle-product}
gives an isomorphism of topological groups
\[
 \Aut(\cC_\Gamma)\cong\Aut(A)\times\Aut(B).
\]

If $S\setminus T=\varnothing$, then $B=K_1$ and $\Aut(B)=\{1\}$. If
$S\setminus T\neq\varnothing$ and $\Gamma_0$ is nonflexible,
Theorem~\ref{thm:flexible} implies that $\Aut(B)$ is discrete. In either case $\Aut(\cC_\Gamma)$ is discrete.
If $\Gamma_0$ is flexible,
Theorem~\ref{thm:flexible} implies that $\Aut(B)$ is not
compact-by-discrete. Since $\Aut(B)$ is a closed direct factor,
Lemma~\ref{lem:closed-subgroup} shows that $\Aut(\cC_\Gamma)$ is not
compact-by-discrete. Finally, (2) implies~(1) because a discrete group is
compact-by-discrete. The three conditions are now equivalent.
\end{proof}

\begin{corollary}
If $G_\Gamma$ is graphically discrete, then every vertex of $T$ is universal
in $\Gamma$ and $\Gamma_0$ is nonflexible. If every vertex of $T$ is universal
in $\Gamma$, then
\[
 G_\Gamma=\left(\prod_{s\in T}G_s\right)\times W_{\Gamma_0}
\]
is graphically discrete if and only if $W_{\Gamma_0}$ is graphically
discrete.
\end{corollary}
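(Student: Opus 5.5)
The first assertion is immediate from Theorem~\ref{thm:chamber-classification}. The left action of $G_\Gamma$ on $\cC_\Gamma$ is simply transitive on vertices, and $\cC_\Gamma$ is connected and locally finite because $S$ is finite and each $G_s$ is finite. Hence this action is geometric. If $G_\Gamma$ is graphically discrete, then $\Aut(\cC_\Gamma)$ is compact-by-discrete. The equivalence (1)$\Leftrightarrow$(3) of Theorem~\ref{thm:chamber-classification} then gives that every vertex of $T$ is universal and that $\Gamma_0$ is nonflexible.

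For the second assertion, assume every vertex of $T$ is universal. I will first justify the displayed splitting. Each $G_s$ with $s\in T$ commutes with every other vertex group, so the graph-product presentation is the direct product of the vertex groups indexed by $T$ and the graph product over $\Gamma_0$. The factors $G_s$ for $s\in T$ pairwise commute, and the graph product over $\Gamma_0$ has all vertex groups of order two, so it is $W_{\Gamma_0}$. If $S\setminus T=\varnothing$, interpret $W_{\Gamma_0}$ as the trivial group; then $G_\Gamma$ is finite and both sides of the equivalence hold trivially. The point is that a finite group acting geometrically on a connected graph forces the graph to be finite, so its automorphism group is finite.

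With $F=\prod_{s\in T}G_s$ finite and $W_{\Gamma_0}$ finitely generated, Lemma~\ref{lem:finite-factor} applies directly. It says that $F\times W_{\Gamma_0}$ is graphically discrete if and only if $W_{\Gamma_0}$ is. That is exactly the claimed equivalence.

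No step is a genuine obstacle. The only care needed is in identifying the graph product over universal vertices as a direct product. One must check that no commutation relation is missing between $T$ and $S\setminus T$, which holds by universality, and that within $T$ all vertex groups commute, again by universality. The degenerate cases $T=\varnothing$ and $S\setminus T=\varnothing$ are handled as above.
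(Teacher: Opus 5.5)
Your proposal is correct and follows the paper's proof. The geometric left action of $G_\Gamma$ on $\cC_\Gamma$ turns the implication (1)$\Rightarrow$(3) of Theorem~\ref{thm:chamber-classification} into the first assertion, and the universal-vertex splitting combined with Lemma~\ref{lem:finite-factor} gives the second; your separate treatment of the case $S\setminus T=\varnothing$ is harmless, though Lemma~\ref{lem:finite-factor} with trivial $\Lambda$ already covers it.
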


\begin{proof}
The left action of $G_\Gamma$ on $\cC_\Gamma$ is geometric.  Thus failure of
condition~(3) in Theorem~\ref{thm:chamber-classification} supplies a witness
to failure of graphical discreteness.  Under the universal-vertex
hypothesis, the displayed splitting holds, and the last assertion follows
from Lemma~\ref{lem:finite-factor}.
\end{proof}

For right-angled Coxeter groups, $T=\varnothing$ and hence $\Gamma_0=\Gamma$.
In this case the flexible obstruction is already supplied by
Theorem~\ref{thm:flexible}.

\subsection{Odd graphs and Menger curve boundaries}

For $n\geq3$, the odd graph
\[
 O_n=\KG(2n-1,n-1)
\]
has vertex set consisting of the $(n-1)$-element subsets of
$\{1,\ldots,2n-1\}$. Two
vertices are adjacent when the corresponding subsets are disjoint \cite[p.~5]{EkinciGauci}. The graph
$O_3$ is the Petersen graph.

\begin{proposition}\label{prop:odd-graphs}
For every $n\geq3$, the graph $O_n$ is flexible, triangle-free, has no
induced four-cycle, is inseparable, and is nonplanar.
\end{proposition}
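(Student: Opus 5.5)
We check the five properties one at a time, using the Kneser description: vertices are the $(n-1)$-subsets of $[2n-1]=\{1,\ldots,2n-1\}$, adjacent exactly when disjoint.

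\emph{Triangle-free.} Three pairwise disjoint $(n-1)$-subsets would need $3n-3>2n-1$ elements, which is impossible for $n\geq3$.

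\emph{No induced four-cycle.} It suffices to show that two distinct vertices $A,C$ have at most one common neighbour. A common neighbour is an $(n-1)$-subset of the complement of $A\cup C$. Since $A\neq C$, we have $|A\cup C|\geq n$, so this complement has at most $n-1$ elements. Hence there is at most one common neighbour. In fact this shows the girth is at least $5$, since $4$-cycles are also excluded.

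\emph{Flexible.} Fix a vertex $A$. Its neighbours are the $(n-1)$-subsets of the $n$-element set $[2n-1]\setminus A$, so the closed star of $A$ is determined by $A$ together with this complement. Choose two points $i,j\in A$; this is possible because $|A|=n-1\geq2$. Let $\phi$ be the automorphism of $O_n$ induced by the transposition $(i\,j)$. It fixes $A$ setwise. It also fixes every subset of the complement of $A$, because $i$ and $j$ do not lie there. So $\phi$ fixes the closed star of $A$ pointwise. It is nontrivial, because some $(n-1)$-set contains $i$ but not $j$, and $\phi$ moves that set.

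\emph{Inseparable.} This should mean that no clique separates $O_n$; that is the hypothesis later needed for one-endedness, via the Mihalik--Tschantz criterion. Because the graph is triangle-free, cliques are single vertices or edges, so it suffices to show that $O_n$ is $3$-connected. The plan is to use that $O_n$ is connected, vertex-transitive and $n$-regular with $n\geq3$. A standard theorem (Watkins, Mader) says a connected vertex-transitive graph of degree $d$ has vertex connectivity at least $2(d+1)/3>2$ when $d\geq3$. If the paper prefers to avoid this citation, I would argue directly instead. Given a vertex or edge $Q$, I would connect any two vertices outside $Q$ by short paths, exploiting that there are many vertex-disjoint paths of length at most about $2n$ between vertices. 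This step is where I expect the main care is needed, so citing the connectivity theorem is the cleaner route. For $n=3$, the Petersen graph is known to be $3$-connected.

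\emph{Nonplanar.} For $n=3$, the Petersen graph contains a $K_{3,3}$ subdivision. For general $n$, I would use Euler's bound. A planar graph of girth $g\geq5$ with $V$ vertices satisfies $E\leq \frac{g}{g-2}(V-2)$, and in particular $E\leq\frac53(V-2)$. But $O_n$ is $n$-regular, so $E=nV/2\geq 3V/2$. This alone gives no contradiction, since $3V/2<\frac53(V-2)$ is possible. For $n\geq4$ the inequality $nV/2\leq \frac53 V$ already fails, which settles nonplanarity. For $n=3$ I use the Petersen graph fact. A cleaner uniform alternative is that $O_n$ contains $O_3$ as an induced subgraph: fix $n-3$ disjoint pairs and take the union of each with the $2$-subsets of the remaining $5$ points. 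Subgraphs of planar graphs are planar, so this also gives nonplanarity for all $n$.
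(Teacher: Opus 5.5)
You handle flexibility, triangle-freeness, the absence of four-cycles and nonplanarity (Euler's bound for $n\geq4$, the Petersen graph for $n=3$) the same way the paper does. The gap is in inseparability, where you use the wrong definition. The property is not needed for one-endedness via Mihalik--Tschantz. It is the hypothesis of the Dani--Haulmark--Walsh Menger-curve criterion \cite[Corollary~1.7]{DHW}. In their sense, a triangle-free graph is inseparable when it is connected and has no separating vertex, no separating edge, no cut pair, and no separating vertex suspension, i.e.\ no induced path $a-c-b$ whose three vertices separate the graph.

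Your reduction ``it suffices to show that $O_n$ is $3$-connected'' therefore does not prove the statement. Three-connectedness rules out the first three obstructions but says nothing about three-vertex separators. For $n\geq4$ the repair is immediate, since the Watkins--Mader bound you quote already gives connectivity at least $\lceil 2(n+1)/3\rceil\geq4$. The paper instead uses that $\KG(N,r)$ has connectivity equal to its degree, so $O_n$ is $n$-connected. For $n=3$, however, the Petersen graph is only $3$-connected, so you must also check that deleting an induced path of length two leaves a connected graph. This is true. The automorphism group is transitive on such paths. Deleting $1,0,4$ from the model with outer cycle $0$--$1$--$2$--$3$--$4$--$0$, spokes $i$--$(i+5)$ and inner cycle $5$--$7$--$9$--$6$--$8$--$5$ leaves a connected graph on seven vertices. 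Your proposal omits this step, and the paper carries it out explicitly.

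Separately, discard your ``cleaner uniform alternative'' for nonplanarity. $O_n$ has odd girth $2n-1$, so for $n\geq4$ it contains no five-cycle and hence no copy of the Petersen graph at all. Your primary nonplanarity argument is unaffected.
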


\begin{proof}
Fix a vertex $A$, so $|A|=n-1$.  A nontrivial permutation of the elements of
$A$, extended by the identity on the complement of $A$, induces an
automorphism of $O_n$. The induced automorphism is nontrivial because it moves
an $(n-1)$-subset containing one moved element but not its image. It fixes $A$
and fixes every neighbour of $A$ pointwise, since those neighbours are subsets
of the complement. Hence $O_n$ is flexible.

Three pairwise adjacent vertices would be three disjoint $(n-1)$-subsets of a
set of size $2n-1$, which is impossible for $n\geq3$.  Thus $O_n$ is
triangle-free.  If two distinct vertices $A$ and $C$ had two distinct common
neighbours, the complement of $A\cup C$ would contain two distinct
$(n-1)$-subsets and would therefore have at least $n$ elements.  But
$|A\cup C|\geq n$, so its complement has at most $n-1$ elements.  This
contradiction shows that $O_n$ has no four-cycle.

For the Kneser graph $\KG(N,r)$ with $N>2r$, every vertex has degree
$\binom{N-r}{r}$. Its vertex connectivity is also
$\binom{N-r}{r}$ \cite[Theorem~2]{EkinciGauci}. Thus $O_n$ is $n$-connected. Dani--Haulmark--Walsh note that a triangle-free graph
is inseparable exactly when it is connected and has no separating vertex,
separating edge, cut pair, or separating vertex suspension
\cite[p.~136]{DHW}. For $n\geq4$, deletion of at most three vertices does
not disconnect $O_n$, so all four obstructions are absent. For $n=3$, the
Petersen graph is $3$-connected, and a direct check shows that deleting the
three vertices of any induced path of length two leaves a connected graph.
The Dani--Haulmark--Walsh criterion gives inseparability.

The Petersen graph is nonplanar.  For $n\geq4$, the graph $O_n$ is
$n$-regular and has girth at least five.  If it were planar, Euler's formula
would give
\[
 |E(O_n)|\leq\frac{5}{3}(|V(O_n)|-2),
\]
whereas regularity gives $|E(O_n)|=n|V(O_n)|/2\geq2|V(O_n)|$, a
contradiction.  Hence every $O_n$ is nonplanar.
\end{proof}

\begin{corollary}\label{cor:menger-odd}
For every $n\geq3$, the right-angled Coxeter group $W_{O_n}$ is hyperbolic,
its Gromov boundary is homeomorphic to the Menger curve, and it is not
graphically discrete.
\end{corollary}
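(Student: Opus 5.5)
Three things have to be established for $W_{O_n}$: hyperbolicity, that the boundary is the Menger curve, and failure of graphical discreteness. Each one I would obtain directly from Proposition~\ref{prop:odd-graphs} together with an earlier result.

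\emph{Hyperbolicity.} By Proposition~\ref{prop:odd-graphs}, $O_n$ has no induced four-cycle. Moudong's criterion, in the form cited earlier as \cite[Corollary~12.6.3]{Davis}, then shows that $W_{O_n}$ is hyperbolic, just as in the proof of Theorem~\ref{thm:polygon-classification}. It is also worth noting that $O_n$ is triangle-free, so the Davis complex is two-dimensional.

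\emph{Menger curve boundary.} I would use the Dani--Haulmark--Walsh characterization \cite{DHW}. For a hyperbolic right-angled Coxeter group whose defining graph is triangle-free (a two-dimensional RACG), the Gromov boundary is the Menger curve exactly when the defining graph is inseparable and nonplanar; inseparability excludes the circle and Sierpi\'nski carpet cases. Proposition~\ref{prop:odd-graphs} supplies both inseparability and nonplanarity for every $n\geq3$.

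This is the step I expect to be the main obstacle. The exact hypotheses in \cite{DHW} must be matched: hyperbolicity, a triangle-free defining graph, the version of inseparability used there, and any minimum-degree or connectedness condition. I would check these against the formulation from \cite[p.~136]{DHW} that was already invoked in Proposition~\ref{prop:odd-graphs}, and cite the relevant theorem of \cite{DHW}, which states that a hyperbolic two-dimensional RACG with inseparable nonplanar defining graph has Menger curve boundary.

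\emph{Not graphically discrete.} Proposition~\ref{prop:odd-graphs} shows that $O_n$ is flexible. In the right-angled case, flexibility of the defining diagram is exactly flexibility of the graph. Theorem~\ref{thm:flexible} therefore shows that $\Aut(\cC(W_{O_n},V O_n))$ is not compact-by-discrete. Since the left action of $W_{O_n}$ on its Cayley graph is geometric, $W_{O_n}$ is not graphically discrete.
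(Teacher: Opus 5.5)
Your proposal is correct and follows the paper's proof step for step: hyperbolicity via \cite[Corollary~12.6.3]{Davis}, the Menger curve via \cite[Corollary~1.7]{DHW} applied to the triangle-free, inseparable, nonplanar graph $O_n$, and failure of graphical discreteness via flexibility, Theorem~\ref{thm:flexible}, and the geometric action on the standard Cayley graph. Minor points: the hyperbolicity criterion is Moussong's, not ``Moudong's''; it is nonplanarity rather than inseparability that excludes the Sierpi\'nski carpet; and only the ``if'' direction of the Dani--Haulmark--Walsh criterion is needed, not the ``exactly when'' form you state.
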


\begin{proof}
A right-angled Coxeter group is hyperbolic if and only if its defining graph
has no induced four-cycle \cite[Corollary~12.6.3]{Davis}.  Thus $W_{O_n}$ is hyperbolic by
Proposition~\ref{prop:odd-graphs}. Since $W_{O_n}$ is hyperbolic and $O_n$ is
triangle-free, inseparable and nonplanar, Dani--Haulmark--Walsh's
Menger-curve criterion
\cite[Corollary~1.7]{DHW} shows that $\partial W_{O_n}$ is homeomorphic to the
Menger curve. Finally, the flexibility of $O_n$ and
Theorem~\ref{thm:flexible} show that $W_{O_n}$ is not graphically discrete.
\end{proof}

For these examples with Menger curve boundary, failure of graphical discreteness is
already witnessed by the standard Coxeter Cayley graph, without passing to a
thick building. They also complement the generic picture suggested by
\cite[\S1.5, Conjecture~1.9]{MSSW}, which predicts that random groups in the
few-relator and Gromov density models are graphically discrete asymptotically
almost surely as the relator length tends to infinity.

\section{Questions}\label{sec:questions}

The present arguments do not decide whether the graphically discrete case of
Theorem~\ref{thm:polygon-classification} is nonvacuous.

\begin{question}\label{q:polygon-existence}
Does there exist a finite thick generalized $m$-gon with $m\geq3$ whose
incidence graph is nonflexible?
\end{question}

For projective planes, Proposition~\ref{prop:plane-perspectivity} reduces
the $m=3$ case to finding a perspectivity-free plane, while
Proposition~\ref{prop:plane-involutions} restricts any such example to square
order or to nonsquare order with an odd solvable collineation group. These
restrictions do not settle Question~\ref{q:polygon-existence}.

More generally, Corollary~\ref{cor:separating-star}
rules out graphical discreteness for every $W_\Gamma$ admitting a proper partial
conjugation, while Theorem~\ref{thm:doubling-obstruction} excludes additional
groups whenever a one-vertex double is flexible.
These obstructions suggest the following group-level problem.

\begin{question}
Which right-angled Coxeter groups $W_\Gamma$ are graphically discrete when
$\Gamma$ and every one-vertex double $D_v\Gamma$ are nonflexible?
\end{question}

\end{document}